\numberwithin{equation}{section}
\DeclareMathOperator{\SL}{SL}
\DeclareMathOperator{\Aut}{Aut}
\newcounter{my_enumerate_counter}
\newtheorem{thm}{Theorem}
\newcommand{\bbC}{{\mathbb C}}
\newcommand{\bbN}{{\mathbb N}}
\newcommand{\calL}{{\mathcal L}}
\newcommand{\cU}{{\mathcal U}}
\newcommand{\tr}{{\rm tr}}
\newcommand{\Hom}{{\rm Hom}}
\newcommand{\wstar}{$\mathrm{W}^*$}
\newcommand{\cstar}{$\mathrm{C}^*$}
\newcommand{\cstr}{\mathrm{C}^*_r}
\DeclareMathOperator{\id}{id}
\newcommand{\cO}{\mathcal O}
\newcommand{\bt}{\mathsf t}
\newcommand{\bbZ}{{\mathbb Z}}
\newcommand{\bbR}{\mathbb R}
\newcommand{\rs}{\restriction}
\newcommand{\customlabel}[2]{%
\protected@write \@auxout {}{\string \newlabel {#1}{{#2}{\thepage}{}{}{}}}}
\newtheorem{theorem}{Theorem}[section]
\newtheorem*{theorem*}{Theorem}
\newtheorem{proposition}[theorem]{Proposition}
\newtheorem*{proposition*}{Proposition}
\newtheorem{lemma}[theorem]{Lemma}
\newtheorem*{lemma*}{Lemma}
\newtheorem*{corollary*}{Corollar}
\newtheorem*{fact*}{Fact}
\theoremstyle{definition}
\newtheorem{definition}[theorem]{Definition}
\newtheorem*{definition*}{Definition}
\newtheorem*{claim*}{Claim}
\newtheorem{conjecture}[theorem]{Conjecture}
\newtheorem*{conjecture*}{Conjecture}
\theoremstyle{remark}
\newtheorem*{example*}{Example}
\newtheorem*{remark*}{Remark}
\newtheorem*{note*}{Note}
\newtheorem*{question*}{Question}
\DeclareMathOperator{\frFx}{\mathfrak F^{\bar x}}
\DeclareMathOperator{\frFxQF}{\mathfrak F_{QF}^{\bar x}}
\DeclareMathOperator{\frFxyQF}{\mathfrak F_{QF}^{\bar x,y}}
\author[Ilijas Farah]{Ilijas Farah}
\thanks{Partially supported by NSERC}
\address{Department of Mathematics and Statistics,
York University,
4700 Keele Street,
Toronto, Ontario, Canada, M3J
1P3} 
\address{Matemati\v cki Institut SANU,
Kneza Mihaila 36,
11\,000 Beograd, p.p. 367,
Serbia}
\email{email: ifarah@yorku.ca}
\urladdr{http://www.math.yorku.ca/$\sim$ifarah}
\thanks{ORCID iD https://orcid.org/0000-0001-7703-6931}
	\dedicatory{To the memory of Eberhard Kirchberg.}
\title{Quantifier elimination in II$_1$ factors}
\date{\today}
\begin{document}

\begin{abstract}
	No type II$_1$ tracial von Neumann algebra has theory that admits quantifier elimination.
\end{abstract}
\maketitle

Model theory is largely the study of definable sets, more precisely sets definable by first order (continuous or discrete) formulas. If the theory of a structure admits elimination of quantifiers, then its definable subsets are definable by quantifier-free formulas and therefore easier to grasp. This is the case with atomless Boolean algebras, dense linear orderings without endpoints, real closed fields, divisible abelian groups\dots See for example \cite{Mark:Model}.
Quantifier elimination is equivalent to an assertion about embeddings between finitely-generated substructures of an ultrapower that resembles the well-known fact that all embeddings of the hyperfinite II$_1$ factor  into its ultrapower are unitarily equivalent (Proposition~\ref{P.QE}, Proposition~\ref{P.QEgeneral}).    
Our motivation for studying quantifier elimination in tracial von Neumann stems from Jekel's work on 1-bounded entropy in \wstar-algebras defined on types (\cite{jekel2022free}, \cite{jekel2022covering}). Our main result implies that 1-bounded entropy is a genuine generalization of Hayes's 1-bounded entropy  (see \cite{hayes2018}, \cite{hayes2021random}). This is because the latter is defined for quantifier-free types, while the former is defined for full types.

In \cite[Theorem~2.1]{GoHaSi:Theory} it was proven that the theory of the  hyperfinite~II$_1$ factor does not admit quantifier elimination (see \S\ref{S.QE}) and that if $N$ is a~II$_1$ McDuff factor such that every separable II$_1$ factor is $N^\cU$-embeddable\footnote{All ultraproducts in this paper are associated to nonprincipal ultrafilters on $\bbN$ and almost all of them are tracial.} then the theory of $N$ does not admit elimination of quantifiers (\cite[Theorem~2.2]{GoHaSi:Theory}). 

In spite of the slow start, the question of quantifier elimination in \cstar-algebras has been answered completely. 
In \cite[Theorem~1.1]{eagle2015quantifier} it was proven that the only \cstar-algebras whose theories in the language of unital \cstar-algebras admit quantifier elimination are (with $K$ denoting the Cantor space) $\bbC$, $\bbC^2$, $M_2(\bbC)$ and $C(K)$, and that the only \cstar-algebras whose theories in the language without a symbol for a unit 
 admit quantifier elimination are $\bbC$ and $C_0(K\setminus \{0\})$. 
 
 The key component of \cite{eagle2015quantifier} is an observation due to Eberhard Kirchberg,\footnote{This is the paragraph with norm ultraproducts.} that there are two very different embeddings of $\cstr(F_2)$ into the ultrapower of the Cuntz algebra $\cO_2$.   First, by \cite{KircPhi:Embedding}, every exact \cstar-algebra embeds into~$\cO_2$ and diagonally into its ultrapower. Second, by~\cite{haagerup2005new}, $\cstr(F_2)$ embeds into $\prod_\cU M_n(\bbC)$, giving a nontrivial embedding into the ultrapower of $\cO_2$. 
 
  Later on,  in  \cite{goldbring2022survey}, paragraph preceding Lemma 5.2, it was pointed out without a proof that the argument from \cite{GoHaSi:Theory} implies that no McDuff II$_1$ factor admits elimination of quantifiers. Also, in   \cite[Proposition~4.17]{goldbring2017theories} it was proven that if $N$ is a II$_1$ factor such that $N$ and $M_2(N)$ have the same universal theory and $N$ is existentially closed for its univeral theory, then $N$ is McDuff. This implies that if $N$ is not McDuff and $M_2(N)$ embeds unitally into~$N^\cU$, then $N$ does not admit elimination of quantifiers. 

During a reading seminar on model theory and free probability based on~\cite{jekel2022free} at York University in the fall semester of 2022, the author (unaware of the recent developments described in the last paragraph) rediscovered an easy argument that the theory of a non-McDuff factor $N$ such that $M_2(N)$ embeds unitally into $N^\cU$, does not admit quantifier elimination  and asserted that closer introspection of the proof of \cite[Theorem~2.1]{GoHaSi:Theory} ought to yield the same result for all II$_1$ factors. This is indeed the case; the following answers the \cite[Question~2.18]{jekel2022free} in case of tracial von Neumann algebras of type II$_1$.

\begin{thm}\label{T.1}
	If $N$ is a tracial von Neumann algebra with a direct summand of type II$_1$, then the theory of $N$ does not admit elimination of quantifiers. 
\end{thm}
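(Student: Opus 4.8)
The plan is to use the reformulation of quantifier elimination supplied by Proposition~\ref{P.QE} (and its general version Proposition~\ref{P.QEgeneral}): the theory of $N$ admits elimination of quantifiers exactly when any two embeddings of a finitely generated tracial von Neumann algebra into $N^\cU$ are conjugate by an automorphism of $N^\cU$. Thus it suffices to produce a finitely generated $A$ together with two embeddings $A\hookrightarrow N^\cU$ that no automorphism of $N^\cU$ intertwines.

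First I would reduce to the case of a separable II$_1$ factor. By downward L\"owenheim--Skolem $\Th(N)=\Th(M)$ for some separable $M\prec N$, and ``having a nonzero central projection that cuts out a type~II$_1$ direct summand'' is expressible by a sentence, so $M$ inherits this property. Since $Z(M)$ is a definable set and the maximal central projection $z$ with $zM$ of type~II$_1$ is readily seen to be a definable element, one may relativize all the formulas below to $z$ --- using $zM^\cU=(zM)^\cU$ --- and assume that $N$ itself is of type~II$_1$; representing a separable type~II$_1$ algebra as a direct integral of II$_1$ factors and localizing at a suitable point of the base (or arguing fibrewise) then reduces matters to $N$ a II$_1$ factor.

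For the core I would follow the mechanism of \cite[Theorem~2.1]{GoHaSi:Theory}. The witnessing $A$ must be infinite dimensional, because any two unital copies of a fixed finite-dimensional tracial von Neumann algebra with the same trace inside a II$_1$ factor are unitarily conjugate, so no such $A$ can witness non-elimination. A natural choice is a singly generated diffuse algebra, say $A=W^*(u)$ for a Haar unitary $u$: its quantifier-free type is rigid (it is merely the Haar distribution on $\bbT$), so that two embeddings of $A$ into $N^\cU$ \emph{automatically} agree on quantifier-free types, whence elimination of quantifiers would force them to agree on full types, and hence --- by countable saturation of $N^\cU$ --- to be conjugate. It then remains to exhibit two Haar unitaries $a_0,a_1\in N^\cU$ that are not $\Aut(N^\cU)$-conjugate, which comes down to separating them by a first-order invariant of the pair consisting of the element and its ambient algebra: for instance the isomorphism type of the relative commutant $\{a_i\}'\cap N^\cU$, or a $\Pi_2$ formula of the sort isolated in \cite{GoHaSi:Theory} measuring a defect of an absorption or homogeneity property at $a_i$. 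Concretely one wants $a_0$ realizing the chosen property and $a_1$, built inside a copy of $L^\infty[0,1]\,\bar\otimes\,R\subseteq N^\cU$ coming from an embedding $R\hookrightarrow N^\cU$, failing it. With this in hand Proposition~\ref{P.QE} yields the theorem, and the direct-summand and non-factor cases follow by the relativization above.

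The step I expect to be the main obstacle --- and the reason the statement does not fall out directly from \cite[Theorem~2.2]{GoHaSi:Theory} nor from the author's argument for non-McDuff $N$ with $M_2(N)\hookrightarrow N^\cU$ --- is precisely to choose such an invariant and realize both of its values \emph{uniformly over all} II$_1$ factors $N$. In the McDuff regime (e.g.\ $N=R$) every element already sitting in $N$ acquires a large, homogeneous environment in $N^\cU$ from central sequences, so the witnessing pair has to be engineered directly inside $N^\cU$; in the non-McDuff regime one has less room inside $N^\cU$ but more rigidity available in $N$ itself. Arranging a single construction that works in both is exactly what ``a closer introspection of the proof of \cite[Theorem~2.1]{GoHaSi:Theory}'' amounts to; once that bookkeeping is carried out, the contradiction with quantifier elimination is immediate.
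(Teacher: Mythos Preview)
Your proposal correctly identifies the overall strategy: exhibit two trace-preserving embeddings of a fixed finitely generated algebra into $N^\cU$ that agree on quantifier-free types but differ on full types, and invoke Proposition~\ref{P.QE}. (A minor quibble: that proposition is about \emph{extending} an embedding $F\to N^\cU$ to $G\supseteq F$, not about automorphism conjugacy; the latter would need the Continuum Hypothesis. For the direction you need this is harmless, since different full types already preclude the required extension.)

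There are, however, two genuine gaps.

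First, the reduction step is unsound. Passing to a direct integral $N=\int^\oplus N_x\,d\mu(x)$ and ``localizing at a point'' does not interact well with ultrapowers: $N^\cU$ is not a direct integral of the $N_x^\cU$, and fibrewise failure of quantifier elimination does not obviously transfer back. The paper avoids this entirely: Lemma~\ref{L.Nate} is proved for \emph{any} type~II$_1$ tracial von Neumann algebra (not just factors), using only that such $N$ can be written as $M_n(\bbC)\bar\otimes N^{1/n}$ for each $n$. The direct-summand case is then handled in two lines by padding with a copy of $\bbC$.

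Second, and more seriously, the heart of the argument is missing. You propose a Haar unitary as the witness and acknowledge that producing two copies with distinguishable first-order environments, uniformly over all II$_1$ $N$, is the ``main obstacle'' --- but then call it ``bookkeeping''. It is not. For a diffuse abelian generator there is no evident mechanism forcing a small relative commutant in $N^\cU$ across all $N$. The paper's solution is of an entirely different nature: take $M_0=L(\SL(2k+1,\bbZ))$ (a property~(T) factor), set $M_1=M_0*P$ with a nontrivial automorphism $\alpha$ fixing $M_0$, and embed $M_0$ into $\prod^\cU M_n(\bbC)\subseteq N^\cU$ via its \emph{irreducible finite-dimensional} representations. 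The Kazhdan pair for $\SL(2k+1,\bbZ)$ then forces $N^\cU\cap\Phi[M_0]'=N^\cU\cap\Phi[M_1]'$ --- a rigidity statement that is the entire content of Lemma~\ref{L.Nate}. The second embedding (via $M_1\rtimes_\alpha\bbZ\hookrightarrow R^\cU\hookrightarrow N^\cU$) has a unitary commuting with $M_0$ but not with $M_1$, so the embedding $\Phi$ of $M_1$ cannot be extended to the crossed product. Property~(T) is the key input, and nothing in your sketch points toward it.
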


The key lemma is proven in \S\ref{S.Nate} and  
the proof of Theorem~\ref{T.1} can be found at the end of this section.  The well-known criterion for quantifier elimination is proven in \S\ref{S.Criterion} (in spite of it being well-known, a self-contained proof of this fact was not available in the literature until it appeared in \cite[Proposition~9.4]{hart2023introduction}). In \S\ref{S.Concluding} we state conjectures on when the theory of a tracial von Neumann algebra is model-completene and when it admits quantifier elimination.

Our terminology is standard. For model theory see \cite{BYBHU}, \cite{hart2023introduction}, for general operator algebras \cite{Black:Operator}, for II$_1$ factors  \cite{anantharaman2017introduction}, and for model theory of tracial von Neumann algebras \cite{jekel2022free} and \cite{goldbring2022survey}.

For simplicity of notation, every tracial state is denoted $\tau$, except those on $M_n(\bbC)$, denoted $\tr_n$. 

A tracial von Neumann algebra is \emph{$R^\cU$-embeddable} if it embeds into some (equivalently, any) ultrapower of the hyperfinite II$_1$ factor $R$ associated with a nonprincipal ultrafilter on $\bbN$. By \cite{ji2020mip}, not all tracial von Neumann algebras with separable dual are $R^\cU$-embeddable.  

\subsection*{A personal note} Two personal notes are in order. First, at  a dinner in Oberwolfach I pointed out, politely and enthusiastically, that much of~\cite{Kirc:Central} can be construed as model theory. Let’s just say that Eberhard  made it clear (politely) that he did not share my enthusiasm. A couple of years later we collaborated on a model theory paper (\cite{eagle2015quantifier}). The present note does for type~II$_1$ factors what \cite{eagle2015quantifier} did for   \cstar-algebras. 
 Second, I wish that I recorded all  conversations that I had with Eberhard. It took me weeks to process some of the enlightening raw information that he dumped on me, parts of which may be lost for posterity. He will be missed. 

\subsection*{Acknowledgments} I am indebted to the participants of the seminar on Model Theory and Free Probability held at York University in the fall semester 2022, and in particular to Saeed Ghasemi, Pavlos Motakis, and Paul Skoufranis for some insightful remarks. I would also like to thank Srivatsav Kunnawalkam Elayavalli for informing me that he proved Lemma~\ref{L.Nate} independently in February 2023 and for useful remarks on an early draft of this paper, and to 
Isaac Goldbring  for remarks on the first version of this paper and to the referee for a delightfully opinionated (and very useful) report. 

\section{Quantifier elimination in II$_1$ factors}\label{S.QE}

We specialize the definitions from \cite[\S 2.6]{Muenster} to the case of tracial von Neumann algebras. The ongoing discussion applies to any other countable continuous language $\calL$ and, with obvious modifications, to any continuous language $\calL$. For simplicity, we consider only formulas in which all free variables range over the (operator) unit ball. This is not a loss of generality, as an easy rescaling argument shows. 

For $n\geq 0$ and an $n$-tuple of variables $\bar x=\langle x_0,\dots, x_{n-1}\rangle$ (if $n=0$ this is the empty sequence) let $\frFx$ be the $\bbR$-algebra of formulas in the language of tracial von Neumann algebras with free variables included in $\bar x$. For a fixed tracial von Neumann algebra $N$ define a seminorm $\|\cdot\|_N$ on $\frFx$ by ($\varphi^M(\bar a)$ denotes the evaluation of $\varphi(\bar x)$ at $\bar a$ in $M$) 
\[
\|\varphi(\bar x)\|_N=\sup \varphi^{N^\cU}(\bar a)
\]
where $\bar a$ ranges over all $n$-tuples in the unit ball of $N^\cU$. 
(The standard definition takes supremum over all structures $M$ elementarily equivalent to~$N$ and all $n$-tuples in $M$ of the appropriate sort, but by the universality of ultrapowers and the Downward L\"owenheim--Skolem theorem, the seminorms coincide.)

Let $\frFxQF$ denote the $\bbR$-algebra of all quantifier-free formulas in $\frFx$. This is clearly a subalgebra of $\frFx$. 

If $M$ is a subalgebra of $N$ and $\bar a$ in $M$, it is said that $M$ is an \emph{elementary submodel} of $N$ if $\varphi^M(\bar a)=\varphi^N(\bar a)$ for all $\varphi\in \frFx$. An isometric embedding is \emph{elementary} if its range is an elementary submodel. For quantifier-free formulas this equality is automatic, but this is a strong assumption in general. For example, any two elementary embeddings of a separable structure into an  ultrapower\footnote{Ultrapower  of a separable structure associated with a nonprincipal ultrafilter on $\bbN$.} are conjugate by an automorphism of the latter if the Continuum Hypothesis holds\footnote{Continuum Hypothesis has little bearing on the relations between separable structures by Shoenfield’s Absoluteness Theorem; see e.g., \cite{Kana:Book}.} (\cite[Corollary~16.6.5]{Fa:STCstar}).  

\begin{definition}\label{Def.QE}
	The theory of a tracial von Neumann algebra $N$ \emph{admits elimination of quantifiers} if $\frFxQF$ is $\|\cdot\|_N$-dense in $\frFx$ for every $\bar x$. 
\end{definition}

Proposition~\ref{P.QE} below is essentially a special case \cite[Proposition~13.6]{BYBHU} stated with a reference to \cite[pp. 84--91]{hensonultraproducts} in lieu of a proof; see also \cite[Lemma~2.14]{jekel2022free}. Until recently, a self-contained proof of this fact could not be found in the literature. This has finally been remedied in \cite[Proposition~9.4]{hart2023introduction}. For reader's convenience I include a proof using ultrapowers instead of elementary extensions  (see Proposition~\ref{P.QEgeneral}, the equivalence of \eqref{1.QEgeneral}, \eqref{2.QEgeneral}, and \eqref{2+.QEgeneral}).  

\begin{proposition}\label{P.QE} For every tracial von Neumann algebra\footnote{Needless to say, the analogous statement holds for \cstar-algebras and for any other axiomatizable category.} $N$ the following are equivalent. 
	\begin{enumerate}
		\item \label{1.QE} The theory of $N$ admits elimination of quantifiers. 
		\item \label{2.QE} For every finitely generated \wstar-subalgebra $G$ of $N^{\cU}$, every  trace-preserving embedding $\Phi\colon F\to N^\cU$ of a finitely generated \wstar-subalgebra $F$ of $G$ extends to a trace-preserving embedding $\Psi\colon G\to N^\cU$. 

	\begin{tikzpicture}
	\matrix[row sep=1cm,column sep=1cm] {
		\node  (G) {$G$}; &
		\node (NU)  {$N^\cU$};\\ 
		\node (F) {$F$};&
		\node (NNU)  {$N^\cU$};\\
	};
	\draw (G) edge [->] node [above] {$\subseteq$} (NU);
	\draw  (F) edge[->] node [left] {$\subseteq$} (G); 
	\draw  (F) edge[->] node [below] {$\Phi$} (NNU); 
	\draw [dashed] (G) edge[->] node [right] {$\Psi$} (NNU); 
\end{tikzpicture}
	\item \label{2+.QE} Same as \eqref{2.QE}, but for arbitrary separable substructures. 
	\end{enumerate}
\end{proposition}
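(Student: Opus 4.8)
The plan is to prove the equivalence $\eqref{1.QE}\Leftrightarrow\eqref{2.QE}$ directly, and then to obtain $\eqref{2.QE}\Leftrightarrow\eqref{2+.QE}$ by a routine exhaustion argument. Throughout I would use two standard facts without further comment. First, the quantifier-free type of a tuple $\bar a$ from the unit ball of a tracial von Neumann algebra is determined by its $*$-moments $\tau(w(\bar a))$ ($w$ a $*$-monomial), and two tuples with the same quantifier-free type generate tracial von Neumann algebras that are isomorphic via a trace-preserving isomorphism matching the tuples entrywise. Second, $N^\cU$ is countably saturated and realizes every type of $\Th(N)$ over $\emptyset$; together with the Downward L\"owenheim--Skolem theorem, this is precisely what lets one compute $\|\cdot\|_N$, and the relation ``$\bar a$ and $\bar a'$ realize the same (quantifier-free) type'', inside $N^\cU$ alone.

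For $\eqref{1.QE}\Rightarrow\eqref{2.QE}$, let $F=W^*(\bar a)\subseteq G=W^*(\bar b)\subseteq N^\cU$ and $\Phi\colon F\to N^\cU$ be as in $\eqref{2.QE}$, with $\bar a,\bar b$ in the unit ball. Since the entries of $\bar a$ lie in $G$, the concatenation $(\bar a,\bar b)$ also generates $G$, so I may assume $\bar a$ is an initial segment of $\bar b$ and write $\bar b=(\bar a,\bar c)$; put $\bar a'=\Phi(\bar a)$. Being a trace-preserving embedding, $\Phi$ witnesses that $\bar a$ and $\bar a'$ have the same quantifier-free type, hence by $\eqref{1.QE}$ the same full type. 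Consider the partial type $p(\bar z)$ over $\bar a'$ asserting that $(\bar a',\bar z)$ has the same quantifier-free type as $(\bar a,\bar c)$. Every finite fragment of $p$ is consistent: the associated formula $\inf_{\bar z}\max_i\,|\sigma_i(\bar x,\bar z)-\sigma_i^{N^\cU}(\bar a,\bar c)|$ (the $\sigma_i$ quantifier-free) has value $0$ at $\bar a$, witnessed by $\bar c$, so it has value $0$ at $\bar a'$ because $\bar a$ and $\bar a'$ have the same full type. By countable saturation of $N^\cU$ some $\bar c'$ in the unit ball realizes $p$; then $\bar b'=(\bar a',\bar c')$ has the same quantifier-free type as $\bar b$, so $\bar b\mapsto\bar b'$ induces a trace-preserving embedding $\Psi\colon G\to N^\cU$, and $\Psi$ extends $\Phi$ since both agree on the generators $\bar a$ of $F$.

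For $\eqref{2.QE}\Rightarrow\eqref{1.QE}$, I would first reduce to a single quantifier. Because $|\inf_{\bar y}\varphi-\inf_{\bar y}\psi|\le\|\varphi-\psi\|_N$ (and likewise for $\sup$, using $\sup_{\bar y}\psi=-\inf_{\bar y}(-\psi)$) and the $\|\cdot\|_N$-closure of $\frFxQF$ is closed under the continuous connectives, an induction on the structure of a formula reduces $\eqref{1.QE}$ to: for every quantifier-free $\psi(\bar x,\bar y)$ the formula $\varphi(\bar x):=\inf_{\bar y}\psi(\bar x,\bar y)$ is $\|\cdot\|_N$-approximable by quantifier-free formulas in $\bar x$. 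By the Stone--Weierstrass theorem on the compact space of quantifier-free $\bar x$-types of $\Th(N)$ -- a space on which $\frFxQF$ restricts to a point-separating unital subalgebra -- this is equivalent to the statement that $\varphi^{N^\cU}(\bar a)$ depends only on the quantifier-free type of $\bar a$. Suppose this fails: after possibly swapping $\bar a$ and $\bar a'$, there are $\bar a,\bar a'$ in the unit ball of $N^\cU$ with the same quantifier-free type and $\varphi^{N^\cU}(\bar a)<\varphi^{N^\cU}(\bar a')-\varepsilon$ for some $\varepsilon>0$. Choose $\bar d$ in the unit ball with $\psi^{N^\cU}(\bar a,\bar d)<\varphi^{N^\cU}(\bar a')-\varepsilon$. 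The map $\bar a\mapsto\bar a'$ induces a trace-preserving embedding $\Phi\colon W^*(\bar a)\to N^\cU$; applying $\eqref{2.QE}$ to $W^*(\bar a)\subseteq W^*(\bar a,\bar d)$ gives an extension $\Psi$, and with $\bar d':=\Psi(\bar d)$ the tuple $(\bar a',\bar d')$ has the same quantifier-free type as $(\bar a,\bar d)$. Hence $\psi^{N^\cU}(\bar a',\bar d')=\psi^{N^\cU}(\bar a,\bar d)<\varphi^{N^\cU}(\bar a')-\varepsilon\le\psi^{N^\cU}(\bar a',\bar d')$, a contradiction.

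Finally, $\eqref{2+.QE}\Rightarrow\eqref{2.QE}$ is immediate, and for $\eqref{2.QE}\Rightarrow\eqref{2+.QE}$ I would write a separable $G$ as the $\|\cdot\|_2$-closure of an increasing chain of finitely generated subalgebras that contains a cofinal subchain exhausting $F$, and then realize, by countable saturation of $N^\cU$, the quantifier-free type over $\Phi(F)$ of a generating sequence of $G$; the consistency of that type on each finite fragment is precisely what $\eqref{2.QE}$ supplies. I expect the only genuinely non-routine point to be the Stone--Weierstrass step in the third paragraph -- the equivalence between ``the value of a formula depends only on the quantifier-free type'' and ``the formula is uniformly approximable by quantifier-free formulas'' -- together with the reduction of an arbitrary formula to the single-quantifier case; everything else is bookkeeping built on the two facts recalled at the start.
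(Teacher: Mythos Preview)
Your proof is correct and follows essentially the same route as the paper's proof of the more general Proposition~\ref{P.QEgeneral}: the implication \eqref{1.QE}$\Rightarrow$\eqref{2.QE} is identical (same full type, realize the quantifier-free type of the extra generators by countable saturation), and for the converse both you and the paper reduce to a single $\inf_y$ over a quantifier-free formula and then, by contraposition, produce two tuples with the same quantifier-free type but different value of $\inf_y\psi$, from which the failure of the extension property is immediate.

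The only differences are cosmetic. You invoke Stone--Weierstrass on the space of quantifier-free types to convert ``not approximable by quantifier-free formulas'' into ``value not determined by the quantifier-free type''; the paper instead writes down a $2n$-variable type forcing the same conclusion and declares it consistent ``by our assumptions'' (which is, of course, the same compactness argument). You also establish \eqref{2.QE}$\Rightarrow$\eqref{2+.QE} directly by realizing a countable quantifier-free type over $\Phi(F)$, whereas the paper closes the cycle through \eqref{1.QE} and leaves \eqref{2+.QE} to be read off from the proof of \eqref{1.QE}$\Rightarrow$\eqref{2.QE}. Either way works; your direct argument is arguably cleaner.
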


Clause \eqref{2.QE} resembles the well-known property of the hyperfinite II$_1$ factor~$R$, that any two copies of $R$ in $R^\cU$ are unitarily conjugate (\cite{Jung:Amenability}, \cite{Bro:Topological}),  the analogous (well-known) fact about strongly self-absorbing \cstar-algebras, as well as  the defining property of `Generalized Jung factors' (in this case, automorphisms are not required to be inner, see \cite{atkinson2022factorial}, also see \cite{atkinson2021ultraproduct}) but it is strictly stronger since neither $R$ nor any of the strongly self-absorbing \cstar-algebras admit quantifier elimination. 
The point is that  $F$ and $G$ in \eqref{2.QE}  range over arbitrary finitely-generated substructures of the ultrapower. 

\section{Topological dynamical systems associated to II$_1$ factors}\label{S.Nate}

Lemma~\ref{L.Nate} below is proven by unravelling of the proof of \cite[Corollary~6.11]{Bro:Topological}.   Modulo the standard results (Proposition~\ref{P.QE}, Proposition~\ref{P.QEgeneral}) it implies Theorem~\ref{T.1}. 

\begin{lemma}\label{L.Nate}
	There are a II$_1$ factor $M_1$ with separable predual, a subfactor $M_0$ of $M_1$, and an automorphism $\alpha$ of $M_1$ such that 
		\begin{enumerate}
		\item\label{1.Nate} $\alpha\rs M_0=\id_{M_0}$, but $\alpha\neq \id_{M_1}$, 
	\end{enumerate}
and for every type II$_1$ tracial von Neumann algebra $N$ and every
ultrapower~$N^{\cU}$ the following two conditions hold. 
\begin{enumerate}[resume]
		\item \label{2.Nate} There is a trace-preserving embedding $\Phi\colon M_1\to N^{\cU}$ such that 
		\[
		N^{\cU}\cap \Phi[M_0]'=N^{\cU}\cap \Phi[M_1]'. 
		\]
		\item \label{3.Nate} There is a trace-preserving embedding $\Phi_1$ of the crossed product $M_1\rtimes_\alpha \bbZ$ into $N^\cU$, and $N^{\cU}\cap \Phi_1[M_0]'\neq N^{\cU}\cap \Phi_1[M_1]'$. 
	\end{enumerate}
Moreover, one can choose $M_0=L(\SL(2k+1,\bbZ))$ for any $k\geq 1$ and (regardless on the choice of $M_0$) $M_1=M_0*P$, for any $R^\cU$-embeddable tracial von Neumann algebra $P$ with separable predual $P$.
\end{lemma}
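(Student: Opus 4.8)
The plan is to take $M_0:=L(\SL(2k+1,\bbZ))$: for $k\ge 1$ the group is ICC with trivial centre, so $M_0$ is a separable $\mathrm{II}_1$ factor, it is a property (T) factor (as $\SL(2k+1,\bbZ)$ has property (T)), and it is $R^\cU$-embeddable (the group is residually finite, hence sofic). Take $M_1:=M_0*P$, a separable $\mathrm{II}_1$ factor in which $M_0$ is an irreducible subfactor (the standard freeness computation gives $M_0'\cap M_1=\bbC$, and likewise $M_0\cap P'=\bbC$ since $P\ne\bbC$). Fixing a non-scalar unitary $w\in M_0$, let $\alpha\in\Aut(M_1)$ be the unique trace-preserving automorphism that is the identity on $M_0$ and sends $p\mapsto wpw^*$ on $P$; this is well defined because $M_0$ and $wPw^*$ are free, have matching traces, and together generate $M_1$. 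Then $\alpha\rs M_0=\id_{M_0}$ while $\alpha\ne\id_{M_1}$, since $w$ does not commute with all of $P$; this is \eqref{1.Nate}.

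For \eqref{3.Nate} the main point is that $M_1\rtimes_\alpha\bbZ$ is $R^\cU$-embeddable. First, $M_1=M_0*P$ is $R^\cU$-embeddable, as free products of $R^\cU$-embeddable algebras are. Next, $\alpha=\Ad(w)\circ\gamma$ where $\gamma:=(\Ad(w^*)\rs M_0)*\id_P$ is a free product of automorphisms, so $M_1\rtimes_\alpha\bbZ\cong M_1\rtimes_\gamma\bbZ$ via $u_\alpha\mapsto w\,u_\gamma$. In $M_1\rtimes_\gamma\bbZ$ the canonical unitary $u$ commutes with $P$ (as $\gamma\rs P=\id$), so $P\vee\{u\}''\cong P\bar\otimes L(\bbZ)$, and normalizes $M_0$ implementing the inner automorphism $\Ad(w^*)$, so $M_0\vee\{u\}''\cong M_0\rtimes_{\Ad(w^*)}\bbZ\cong M_0\bar\otimes L(\bbZ)$. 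A short computation with the conditional expectations onto $\{u\}''=L(\bbZ)$ (using freeness of $M_0$ and $P$) shows that $M_0\vee\{u\}''$ and $P\vee\{u\}''$ are free with amalgamation over $\{u\}''$; thus $M_1\rtimes_\alpha\bbZ$ is an amalgamated free product, over a hyperfinite subalgebra and with trace-preserving conditional expectations, of the $R^\cU$-embeddable algebras $M_0\bar\otimes L(\bbZ)$ and $P\bar\otimes L(\bbZ)$, and is therefore itself $R^\cU$-embeddable (Brown--Dykema--Jung). Since $N$ is of type $\mathrm{II}_1$, $R$ embeds in $N$ and hence $R^\cU$ embeds in $N^\cU$, so composing gives a trace-preserving embedding $\Phi_1\colon M_1\rtimes_\alpha\bbZ\to N^\cU$. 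The image of the canonical unitary commutes with $\Phi_1[M_0]$ (it commutes with $M_0$ inside the crossed product, because $\alpha\rs M_0=\id$) but not with $\Phi_1[M_1]$ (because $\alpha\ne\id_{M_1}$ and $\Phi_1$ is injective), so $N^\cU\cap\Phi_1[M_0]'\ne N^\cU\cap\Phi_1[M_1]'$.

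The heart of the lemma is \eqref{2.Nate}. I would reduce it to the assertion $(\star)$: there is a trace-preserving embedding $\iota_0\colon M_0\to N^\cU$ with $N^\cU\cap\iota_0[M_0]'=\bbC$. Granting $(\star)$, extend $\iota_0$ to a trace-preserving embedding $\Phi\colon M_1=M_0*P\to N^\cU$ by adjoining a copy of $P$ that is $\tau$-free from $\iota_0[M_0]$ -- such a free complement exists because $N^\cU$ is countably saturated and $M_1$ is $R^\cU$-embeddable, the type over $\iota_0[M_0]$ of such a copy being finitely approximable via Voiculescu's asymptotic freeness inside matricial subalgebras of $N^\cU$. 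Since $\Phi[M_0]=\iota_0[M_0]$ is irreducible in $N^\cU$ one then has $N^\cU\cap\Phi[M_1]'\subseteq N^\cU\cap\Phi[M_0]'=\bbC$, so both relative commutants are $\bbC$ and \eqref{2.Nate} holds. It remains to establish $(\star)$, which is the step I expect to be the main obstacle; this is where property (T) of $M_0$ and the hypothesis that $N$ is of type $\mathrm{II}_1$ are used. One begins with any trace-preserving embedding $\iota\colon M_0\to N^\cU$ (one exists since $M_0\hookrightarrow R^\cU\hookrightarrow N^\cU$) and, if its relative commutant is nontrivial, passes to a corner embedding $x\mapsto p\,\iota(x)$ of $M_0$ into $pN^\cU p$ for a nonzero projection $p$ in the relative commutant; property (T) -- equivalently, the local rigidity (spectral gap) of almost-representations of $\SL(2k+1,\bbZ)$ -- forces a suitable such compression, or a limit of them realized inside the countably saturated $N^\cU$, to have trivial relative commutant. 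This is exactly what one extracts by unravelling \cite[Corollary~6.11]{Bro:Topological}, which treats $N=R$; carrying it out for an arbitrary type-$\mathrm{II}_1$ algebra $N$ is the real work. The ``Moreover'' clause is then immediate, since the argument uses nothing about $M_0$ beyond $k\ge 1$ (needed for property (T) and for factoriality) and nothing about $P$ beyond being $R^\cU$-embeddable with separable predual.
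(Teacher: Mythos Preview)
Your approach to \eqref{2.Nate} has a genuine gap at $(\star)$. The compression sketch does not work as stated: cutting by a projection $p$ in the relative commutant lands you in $pN^\cU p$ rather than $N^\cU$, and the relative commutant there is $p(\iota[M_0]'\cap N^\cU)p$, with no evident mechanism for it to become trivial under iteration or in a saturated limit. More to the point, the paper does \emph{not} establish $(\star)$, and its argument shows why one need not: the relative commutant of $\Phi[M_0]$ in $N^\cU$ turns out to be $\prod^\cU(1_{M_n(\bbC)}\bar\otimes N^{1/n})$, which is far from trivial.

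The paper's route is this. Since $\Gamma=\SL(2k+1,\bbZ)$ has infinitely many inequivalent finite-dimensional irreducible representations $\rho_n\colon\Gamma\to M_n(\bbC)$, one embeds $M_0=L(\Gamma)$ into $\prod^\cU M_n(\bbC)$ via $(\rho_n)_n$, and then into $N^\cU$ by $a_n\mapsto a_n\otimes 1_{N^{1/n}}$ using $N\cong M_n(\bbC)\bar\otimes N^{1/n}$. The copy of $P$ is placed \emph{inside} $\prod^\cU M_n(\bbC)$ (after conjugating by a Haar unitary there, free from everything in sight, furnished by Voiculescu), so that $\Phi[M_1]\subseteq\prod^\cU M_n(\bbC)$. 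Now the Kazhdan pair $(F,\varepsilon)$ for $\Gamma$ together with the irreducibility of each $\rho_n$ forces any $b\in N^\cU$ commuting with $\Phi[M_0]$ to lie in $\prod^\cU(1_{M_n(\bbC)}\otimes N^{1/n})$---the standard spectral-gap argument, applied coordinatewise---and hence to commute with all of $\prod^\cU M_n(\bbC)\supseteq\Phi[M_1]$. So the point is not that the commutant of $\Phi[M_0]$ is small, but that it coincides with the commutant of a fixed larger algebra that already contains $\Phi[M_1]$. (Incidentally, your extension step---realizing a copy of $P$ free from $\iota_0[M_0]$ over an \emph{arbitrary} $\iota_0$---is also not obviously available; in the paper this works precisely because the embedding of $M_0$ factors through $\prod^\cU M_n(\bbC)$, where Voiculescu's free Haar unitary lives.) Your construction of $\alpha$ and your treatment of \eqref{3.Nate} differ from the paper's, which takes $\alpha=\id_{M_0}*\alpha_0$ for a nontrivial $\alpha_0\in\Aut(P)$ and dispatches \eqref{3.Nate} by a one-line citation; those differences are harmless, and the gap is entirely in \eqref{2.Nate}.
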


By for example taking $P=L^\infty([0,1])$, we can take $M_0=L(\SL(3,\bbZ)$ and $M_1=L(\SL_3(\bbZ)*\bbZ)$. 

\begin{proof}  Let $\Gamma$ be a property (T) group with infinitely many inequivalent irreducible representations on finite-dimensional Hilbert spaces. 	  For the `moreover' part, fix $k\geq 1$ and an $R^\cU$-embeddable tracial von Neumann algebra $P$ with separable predual, let $\Gamma=\SL(2k+1,\bbZ)$ for any $k\geq 1$, let $M_0=L(\Gamma)$, $M_1=L(\Gamma)*P$, and $\alpha=\id_{M_0}*\alpha_0$, for some nontrivial automorphism $\alpha_0$ of $L(\bbZ)$. Thus \eqref{1.Nate} holds. Since $M_1$ is $R^{\cU}$-embeddable, \eqref{3.Nate} follows by \cite[Proposition~3.4(2)]{anantharaman1995amenable}. 
	
	It remains to prove \eqref{2.Nate}. For $n\geq 2$ let 
	$\rho_n\colon \Gamma\curvearrowright \ell_2(n)$ be an irreducible action of $\Gamma$ on $\ell_2(n)$, if such action exists, and trivial action otherwise. (The choice of $\rho_n$ in the latter case will be completely irrelevant.) Then $\rho_n$ defines a unital *-homomorphism of the group algebra $\bbC\Gamma$ into $M_n(\bbC)$, also denoted~$\rho_n$. Let $\rho=\bigoplus_n \rho_n\colon \bbC\Gamma\to \prod_n M_n(\bbC)$.  
	
	Fix a nonprincipal ultrafilter $\cU$ on $\bbN$ that concentrates on the set $\{n\mid \rho_n$ is irreducible$\}$. If $q\colon \prod_n M_n(\bbC)\to \prod^\cU M_n(\bbC)$ is the quotient map, then $q\circ \rho\colon \bbC\Gamma\to \prod^\cU M_n(\bbC)$ is a unital *-homomorphism.  Let $M_0$ be  the ultraweak closure of  $q\circ \rho[\bbC\Gamma]$. If $\Gamma=\SL(2k+1,\bbZ)$, then 
	\cite[Theorem~1]{bekka2007operator} implies that $M_0$ isomorphic to the group factor $L(\Gamma)$; this nice fact however does not affect the remaining part of our proof.
	 
 For $g\in \Gamma$ we have a representation $(u(g)_n)/\cU$, where $u(g)_n=\rho_n(g)$ is a unitary in $M_n(\bbC)$ for every $n$.
 Since $P$ is $R^\cU$-embeddable, it is also embeddable into $\prod^\cU M_n(\bbC)$. Let $K\subseteq \prod^\cU M_n(\bbC)$ be a countable set that that generates an isomorphic copy of $P$.

 Our copy of $M_0$ in $\prod^\cU M_n(\bbC)$ and the copy of $P$ generated by $K$ need not generate an isomorphic copy of $M_1=M_0*P$. In order to `correct’ this, we invoke some standard results.

	 At this point we need terminology from free probability. A unitary~$u$ in a tracial von Neumann algebra $(M,\tau)$ is a \emph{Haar unitary} if $\tau(u^m)=0$ whenever $m\neq 0$.  Such Haar unitary is \emph{free from} some $X\subseteq M$ if for every $m\geq 1$, if $a_j$ is such that $\tau(a_j)=0$ and in the linear span of $X$ and  if $k(j)\neq 0$ for $0\leq j<m$, then (note that $\tau(u^{k(j)})=0$ since $u$ is a Haar unitary)  
	 \[
	 \tau(a_0 u^{k(0)} a_1 u^{k(1)}\dots a_{m-1} u^{k(m-1)})=0. 
	 \]
	 More generally, if $X$ and $Y$ are subsets of $M$ then  $X$ is \emph{free from} $Y$ if for every $m\geq 1$, if $a_j$  is such that $\tau(a_j)=0$ and in the linear span of $X$ and $b_j$ is such that $\tau(b_j)=0$ and in the linear span of $Y$ for $j<m$, then  
	 		 \[
	 	\tau(a_0 b_0 a_1 b_1\dots a_{m-1} b_{m-1})=0. 
	 	\]
	 	 Since $\Gamma$ is a Kazhdan group (\cite{bekka2008kazhdan}), we can fix a Kazhdan pair $F,\varepsilon$ for $\Gamma$.
	 By \cite[Theorem~2.2]{voiculescu1998strengthened}, there is a Haar unitary $w\in \prod^\cU M_n(\bbC)$  free from $X=\{(u(g)_n)/\cU\mid g\in F\}\cup \{K\}$. 

A routine calculation shows that $K'=\{waw^*\mid a\in K\}$ is free from~$X$ and that $K'$ generates an isomorphic copy of $P$. Therefore the \wstar-subalgebra  of $\prod^\cU M_n(\bbC)$ generated by $\{(u(g)_n)/\cU\mid g\in F\}\cup \{K'\}$ is isomorphic to $M_1\cong M_0*P$. 
	This defines an embedding $\Phi_0\colon M_1\to \prod^\cU M_n(\bbC)$. 
	
	 Fix a type II$_1$ tracial von Neumann algebra $N$. In order to find an embedding of $M_1$ into $N^\cU$ as required in \eqref{2.Nate}, for every $n\geq 1$ write $N=M_n(\bbC)\bar\otimes N^{1/n}$ (where $N^{1/n}$ is the corner of $N$ associated to a projection whose center-valued trace is $1/n$).
Define an embedding of $\prod^\cU M_n(\bbC)$ into $N^\cU$ by (the far right side is $N^\cU$ in disguise)
\[
\textstyle \prod^{\cU} M_n(\bbC)\ni (a_n)/\cU\mapsto (a_n\otimes 1_{N^{1/n}})/\cU\in \prod^{\cU} (M_n(\bbC)\bar\otimes N^{1/n}).
\]
Let $\Phi\colon M_1\to N^\cU$ be the composition of the embedding $\Phi_0$ of $M_1$ into $\prod^\cU M_n(\bbC)$ with this embedding. 

It remains to prove that $N^\cU\cap \Phi[M_0]'=N^\cU\cap \Phi[M_1]'$. 
Since $M_0\subseteq M_1$, only the forward inclusion requires a proof. Suppose that $b\in N^\cU\cap \Phi[M_0]'$ and fix a representing sequence $(b_n)$ for $b$. Fix for a moment $\delta>0$. Then the set  ($F,\varepsilon$ is a Kazhdan pair for $\Gamma$) 
\[
Z_\delta=\{n\mid \max_{g\in F} \|u^g_n b_n -b_n u^g_n\|_2<\varepsilon\delta\}
\]
belongs to $\cU$.\footnote{The set $Z_\delta$ would have belonged to $\cU$ regardless of whether the maximum had been taken over $F$ or some other finite subset of $\Gamma$. The maximum has been taken over $F$ because we need to bound the norm of the commutator of $b$ with all elements of $F$, and not elements of some other finite subset of $\Gamma$.}  

For each $n$ we define an action $\sigma_n\colon \Gamma\curvearrowright L^2(N,\tau)$  as follows. For $f\in \Gamma$ let $\tilde \rho_n(f)=\rho_n(f)\otimes 1_{N^{1/n}}$.  For  $c\in N$, let 
\[
f.c=\tilde\rho_n(f)c\tilde \rho_n(f)^*.
\]
This gives an action by isometries of $\Gamma$ on $(N,\|\cdot\|_{2,\tau})$, which continuously extends to action $\sigma_n$ of $\Gamma$ on $L^2(N,\tau)$. 

For $g\in F$ and $n\in Z_\delta$ we have $\|g.b_n-b_n\|_{2}=\|u^g_n b_n -b_n u^g_n\|_2<\varepsilon\delta$. 
Since $\rho_n$ is an irreducible representation, the space of $\sigma_n$-invariant vectors in $L^2(N,\tau)$ is equal to $1_{M_n(\bbC)}\otimes L^2(N^{1/n},\tau)$. Let $P_n$ be the projection onto this space. 
Since $F,\varepsilon$ is a Kazhdan pair for $\Gamma$, by \cite[Proposition 12.1.6]{BrOz:C*} (with $\Gamma=\Lambda$) or  \cite[Proposition~1.1.9]{bekka2008kazhdan} (the case when~$F$ is compact, and modulo rescaling) we have $\|b_n-P_n(b_n)\|_2<\varepsilon$. 

Therefore $(P_n(b_n))/\cU=(b_n)/\cU$ belongs to $\prod^\cU (1_{M_n(\bbC)}\bar\otimes N^{1/n})$, which is equal to $N^\cU\cap (\prod^{\cU}M_n(\bbC))'$. Recall that $\Phi[M_1]\subseteq \prod^\cU M_n(\bbC)$. Since $b$ in $N^\cU\cap \Phi[M_0]'$ was arbitrary, $N^\cU\cap \Phi[M_0]’$ is included in $N^\cU\cap \Phi[M_1]'$ (and is therefore equal to it), as required. 
\end{proof}

Lemma~\ref{L.Nate} shows, in terminology of \cite[Corollary~6.11]{Bro:Topological} (also \cite{brown2013groups}), that for all type II$_1$ tracial von Neumann algebras  $N$, all $k\geq 1$,  and  all $R^\cU$-embeddable tracial von Neumann algebras with separable predual~$M$, for the natural action of  $\Aut M$ on  $\Hom(L(\SL(2k+1,\bbZ)*M,N^\cU)$ there is an extreme point with trivial stabilizer.

\begin{proof}[Proof of Theorem~\ref{T.1}]  	Suppose for a moment that $N$ is of type II$_1$.  Let $M_0$ and $M_1$ be as in Lemma~\ref{L.Nate}. The embeddings of $M_1$ and $M_1\rtimes_\alpha\bbZ$ into $N^\cU$ provided by \eqref{2.Nate} and \eqref{3.Nate} of this lemma satisfy \eqref{2.QE} of Proposition~\ref{P.QE}, and therefore the theory of $N$ does not admit elimination of quantifiers. 
	
	 In the general case, when $N$ has a type II$_1$ summand, we can write it as $N=N_0\oplus  N_1$, where $N_0$ is type I and $N_1$ is type II$_1$ (e.g., \cite[\S III.1.4.7]{Black:Operator}). Then $N^\cU=N_0^\cU\oplus N_1^\cU$. Let $r=\tau(1_{N_1})$, 
 $P_0=\bbC\oplus M_1$, and $P_1=\bbC\oplus M_1\rtimes_\alpha \bbZ$, with the tracial states $\tau_0$ and $\tau_1$ such that $\tau_0(1_{M_1})=\tau_1(1_{M_1\rtimes_\alpha\bbZ})=r$. Since type II$_1$ algebra cannot be embedded into one of type I, this choice of tracial states forces that every embedding of $M_1$ into $N^\cU$ sends $1_{M_1}$ to the image of~$1_{N_1}$ under the diagonal map. An analogous fact holds for $M_1\rtimes_\alpha\bbZ$. Therefore, as in the factorial case,  Proposition~\ref{P.QE} implies that the theory of $N$ does not admit elimination of quantifiers.
\end{proof}

\section{Proof of Proposition~\ref{P.QE}}
\label{S.Criterion}
As promised, here is a long overdue self-contained proof of  \cite[Proposition~13.6]{BYBHU} (Proposition~\ref{P.QEgeneral} below).  We provide a proof in case when the language $\calL$ is single-sorted and countable. The former is a matter of convenience\footnote{Although the language of tracial von Neumann algebras has a sort for every operator $n$-ball, by homogeneity one can assume that all free variables range over the unit ball anyway.} and the latter requires a minor change of the statement, considering sufficiently saturated models instead of ultrapowers (see \cite[Proposition~13.6]{BYBHU}). 
 
 Let  $\bar a$ be an $n$-tuple of elements in a metric structure $M$ and let $\bar x$ be an $n$-tuple of variables. Each expression of the form ($\varphi(\bar x,y)$ is a formula and~$r\in \bbR$)  
 \[
 \varphi(\bar a,y)=r, \qquad \varphi(\bar a,y)\geq r, \qquad \varphi(\bar a,y)\leq r
 \] 
 is a \emph{condition} (in some contexts called \emph{closed condition}) in $y$ over $\bar a$. A set of conditions over $\bar a$ is a \emph{type} over $\bar a$. If all formulas occurring in conditions of a type are quantifier-free, then the type is said to be quantifier-free. 
Some $c\in N^\cU$ satisfies condition  $\varphi(\bar a,y)=r$ if $\varphi^{N^\cU}(\bar a,c)=r$, and it \emph{realizes the type $\bt(y)$} if it satisfies all of its conditions.  
A type $\bt(y)$ over $\bar a$ in some metric structure $M$ is \emph{consistent} if for every finite set of conditions in $\bt(y)$ and every $\varepsilon>0$ some element of $M$ approximately realizes each of these conditions, up to $\varepsilon$. 

The salient property of ultrapowers (associated with nonprincipal ultrafilters on $\bbN$) is that they are \emph{countably saturated}: every consistent type over a separable set is realized (e.g., \cite{BYBHU}, \cite{Muenster}, \cite{Fa:STCstar}).

The \emph{quantifier-free type of an  $n$-tuple $\bar a$} in $M$ is the set of all conditions of the form $\varphi^M(\bar x)=\varphi^M(\bar a)$, when $\varphi$ ranges over $\frFxQF$.   The \emph{full type of an  $n$-tuple $\bar a$} in $M$ is the set of all conditions of the form $\varphi^M(\bar x)=\varphi^M(\bar a)$, when $\varphi$ ranges over $\frFx$. Quantifier-free and full types are naturally identified with a homomorphism from $\frFxQF$ ($\frFx$, respectively)  into $\bbR$ (see \cite{Muenster}). 
If $X$ is a subset of a metric structure $M$, we may expand the language of $M$ by constant symbols for the element of $X$. The type of $\bar a$ in $M$ in the expanded language is called the \emph{type of $\bar a$ over $X$}.

The following well-known and straightforward lemma is somewhat illuminating. 

\begin{lemma}\label{L.QFtype}
Suppose that $\bar a$ and $\bar a'$ are $n$-tuples in  metric structures in the same language.  Then $a_j\mapsto a'_j$ for $j<n$ defines an isometric isomorphism between the structures generated by $\bar a$ and $\bar a'$ if and only if $\bar a$ and $\bar a'$ have the same quantifier-free type. \qed 
\end{lemma}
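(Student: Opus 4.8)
The plan is to prove both implications by the standard term-algebra argument. Let $\calL$ be the common language, let $M$ be the structure containing $\bar a$ and $M'$ the one containing $\bar a'$, and recall the two facts that do all the work. First, in a metric structure the distance $d(t_1(\bar x),t_2(\bar x))$ between the interpretations of two terms, and each basic relation $R(t_1(\bar x),\dots,t_k(\bar x))$ applied to terms, are themselves atomic (hence quantifier-free) $\calL$-formulas in $\bar x$. Second, the substructure generated by $\bar a$ is the metric closure of the set $\{t^M(\bar a) : t\ \text{a term in}\ \bar x\}$ of term-values, and likewise for $\bar a'$.

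For the nontrivial direction, assume $\bar a$ and $\bar a'$ realize the same quantifier-free type, so $\varphi^M(\bar a)=\varphi^{M'}(\bar a')$ for every quantifier-free $\varphi(\bar x)$. I would define a map $\Theta$ on the dense set of term-values by $\Theta(t^M(\bar a))=t^{M'}(\bar a')$. Both well-definedness and the isometry property follow at once from applying the hypothesis to the quantifier-free formula $d(t(\bar x),s(\bar x))$: its value at $\bar a$ equals its value at $\bar a'$, giving $d(t^M(\bar a),s^M(\bar a))=d(t^{M'}(\bar a'),s^{M'}(\bar a'))$ for all terms $t,s$, which yields well-definedness (and injectivity) in the case where the left side is $0$, and the isometry in general. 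That $\Theta$ intertwines every function symbol is immediate, since $f(t_1,\dots,t_k)$ is again a term, so $\Theta$ commutes with $f^M$ by construction; that it intertwines every relation symbol follows by applying the hypothesis to the atomic formula $R(t_1(\bar x),\dots,t_k(\bar x))$. Finally I would extend $\Theta$ from the dense set of term-values to the whole generated substructure by uniform continuity (an isometry extends uniquely to the closure), and build the inverse symmetrically, so that $\Theta$ is the desired isometric isomorphism with $\Theta(a_j)=a'_j$.

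For the converse, suppose $a_j\mapsto a'_j$ extends to an isometric isomorphism $\Theta$ of the generated substructures. Since $\Theta$ commutes with every function symbol and sends each generator $a_j$ to $a'_j$, a routine induction on the complexity of terms gives $\Theta(t^M(\bar a))=t^{M'}(\bar a')$ for every term $t$. An induction on the construction of a quantifier-free formula $\varphi(\bar x)$ then yields $\varphi^M(\bar a)=\varphi^{M'}(\bar a')$: the atomic cases $d(t_1,t_2)$ and $R(t_1,\dots,t_k)$ use that $\Theta$ is an isometry and preserves $R$, together with the identity $\Theta(t_i^M(\bar a))=t_i^{M'}(\bar a')$ just established, while the connective cases are immediate because the continuous connectives are applied pointwise to the values of the subformulas. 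Hence the quantifier-free types coincide.

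I expect the only genuine subtlety to lie in the forward-direction passage to the metric closure: one must verify that $\Theta$, defined and isometric on the dense subset of term-values, extends to an honest \emph{structure} isomorphism and not merely an isometry of metric spaces. This is exactly where the requirement that every function and relation symbol of a metric structure carry a uniform continuity modulus is used, since it guarantees that the intertwining identities for the $f^M$ and $R^M$ survive the passage to the completion.
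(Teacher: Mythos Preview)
Your argument is correct and is exactly the standard term-algebra proof one would give; the only thing to note is that the paper itself does not supply a proof at all---the lemma is stated as ``well-known and straightforward'' and closed with a bare \qed. So there is nothing to compare your approach against: you have filled in precisely the folklore argument the paper chose to omit.
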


While the quantifier-free type of a tuple codes only the isomorphism type of an algebra generated by it,  the full type codes first-order properties such as the existence of square roots of unitaries and Murray--von Neumann equivalence of projections. These existential properties are coded by K-theory, at least in unital \cstar-algebras for which Groethendieck maps are injective. In this case, a unitary $u$ has the $n$-th root if and only if the $K_1$-class $[u]_1$ is divisible by $n$, and projections $p$ and $q$ are Murray--von Neumann equivalent if and only if $[p]_0=[q]_0$ (see \cite{RoLaLa:Introduction} or \cite{blackadar1998k}). However, the information coded by theory and types is different from that coded by K-theory. On the one hand, there are separable AF-algebras with nonisomorphic $K_0$ groups (\cite{debondt2022}) and on the other hand there are separable, nuclear \cstar-algebras with the same Elliott invariant but different theories. All known counterexamples to the Elliott program fall into this category; see  `The theory as an invariant' in \cite[p. 4--5]{Muenster}.

In the following $\equiv$ denotes the relation of elementary equivalence (i.e., sharing the same theory). 

\begin{proposition}\label{P.QEgeneral} If $\calL$ is a countable metric language, then for every $\calL$-structure $N$ the following are equivalent. 
	\begin{enumerate}
		\item \label{1.QEgeneral} The theory of $N$ admits elimination of quantifiers. 
		\item \label{2.QEgeneral} For every finitely generated $\calL$-substructure $G$ of an ul\-tra\-po\-wer\footnote{Again, this ultrapower is associated to a nonprincipal ultrafilter on $\bbN$.}~$N^{\cU}$, every isometric embedding of a finitely generated $\calL$-substructure $F$ of $G$ into $N^\cU$ extends to an isometric embedding of $G$. 
		\item \label{2+.QEgeneral} Same as \eqref{2.QEgeneral}, but for arbitrary separable substructures. 
		\item \label{2-.QEgeneral} For every finitely generated $\calL$-sub\-struc\-tu\-re $G$ of some $M_1\equiv N$, for every isometric embedding of a finitely generated $\calL$-sub\-struc\-tu\-re $F$ of $G$ into $M_2\equiv N$ there is an elementary extension $M_3$ of $M_2$ such that the embedding of $F$ into $M_3$ extends to an embedding of~$G$ into~$M_3$. 
		
		\item \label{3.QEgeneral} If $\varphi(\bar x,y)$ is quantifier free $\calL$-formula, then the formula $\inf_y \varphi(\bar x,y)$ is a $\|\cdot\|_N$-limit of quantifier free formulas. 
	\end{enumerate}
\end{proposition}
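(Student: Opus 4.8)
The plan is to prove the loop $\eqref{1.QEgeneral}\Rightarrow\eqref{2+.QEgeneral}\Rightarrow\eqref{2.QEgeneral}\Rightarrow\eqref{3.QEgeneral}\Rightarrow\eqref{1.QEgeneral}$, together with the side-loop $\eqref{1.QEgeneral}\Rightarrow\eqref{2-.QEgeneral}\Rightarrow\eqref{2.QEgeneral}$. The only external inputs are Lemma~\ref{L.QFtype}, countable saturation of ultrapowers over nonprincipal ultrafilters on $\bbN$ (so every type over a separable set, in particular every $n$-type of $\Th(N)$, is realized in such an ultrapower, of $N$ or of any other model of $\Th(N)$), and the Downward L\"owenheim--Skolem theorem together with universality of ultrapowers. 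The implication $\eqref{2+.QEgeneral}\Rightarrow\eqref{2.QEgeneral}$ is trivial since finitely generated substructures are separable, and $\eqref{1.QEgeneral}\Rightarrow\eqref{3.QEgeneral}$ is trivial since $\inf_y\varphi(\bar x,y)\in\frFx$.

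The soft implications $\eqref{1.QEgeneral}\Rightarrow\eqref{2+.QEgeneral}$, $\eqref{1.QEgeneral}\Rightarrow\eqref{2-.QEgeneral}$ and $\eqref{2-.QEgeneral}\Rightarrow\eqref{2.QEgeneral}$ all rest on one observation: if $\Th(N)$ admits elimination of quantifiers, then any two $n$-tuples in models of $\Th(N)$ with the same quantifier-free type have the same full type, because a sequence of quantifier-free formulas converging to a formula $\varphi$ in $\|\cdot\|_N$ does so uniformly over all models of $\Th(N)$. For $\eqref{1.QEgeneral}\Rightarrow\eqref{2+.QEgeneral}$: writing $(a_i)_{i\in\bbN}$ for generators of $G\subseteq N^\cU$ with $(a_i)_{i\in S}$ generating $F$ and $b_i=\Phi(a_i)$ for $i\in S$, Lemma~\ref{L.QFtype} gives that $(a_i)_{i\in S}$ and $(b_i)_{i\in S}$ have the same quantifier-free, hence full, type; consequently the full type $\{\varphi((b_i)_{i\in S},(x_i)_{i\notin S})=\varphi^{N^\cU}((a_i)_{i\in\bbN})\}$ over $\{b_i:i\in S\}$ has every finite fragment realized (test it with instances of $\inf_{\bar x}\max_j|\varphi_j(\bar y,\bar x)-r_j|$, whose value at $(b_i)_{i\in S}$ equals its value at $(a_i)_{i\in S}$), so by countable saturation it is realized by some $(c_i)_{i\notin S}$, and $a_i\mapsto b_i$ ($i\in S$), $a_i\mapsto c_i$ ($i\notin S$) is the required extension of $\Phi$. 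For $\eqref{1.QEgeneral}\Rightarrow\eqref{2-.QEgeneral}$ one runs the same argument inside $M_3:=M_2^\cU$ (using $M_1\equiv M_2^\cU\equiv N$), and for $\eqref{2-.QEgeneral}\Rightarrow\eqref{2.QEgeneral}$ one applies \eqref{2-.QEgeneral} with $M_1=M_2=N^\cU$ and then, since $N^\cU\preceq M_3$, realizes the quantifier-free type of the image of $G$ over $\Phi[F]$ back inside the countably saturated $N^\cU$.

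The substantive implication is $\eqref{2.QEgeneral}\Rightarrow\eqref{3.QEgeneral}$. Fix quantifier-free $\varphi(\bar x,y)$ and set $\psi(\bar x)=\inf_y\varphi(\bar x,y)$. By compactness of the space of quantifier-free $n$-types of $\Th(N)$ and the Stone--Weierstrass theorem (the quantifier-free formulas separate these types, contain the constants, and are closed under $\max$ and $\min$), $\psi$ lies in the $\|\cdot\|_N$-closure of $\frFxQF$ if and only if it is constant on the fibres of the restriction map from full to quantifier-free $n$-types, that is, iff $\psi^{N^\cU}(\bar a)=\psi^{N^\cU}(\bar a')$ whenever $\bar a,\bar a'\in N^\cU$ have the same quantifier-free type. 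Given such $\bar a,\bar a'$: by countable saturation pick $c\in N^\cU$ with $\varphi^{N^\cU}(\bar a,c)=\psi^{N^\cU}(\bar a)$ (the closed condition $\varphi(\bar a,y)\le\psi^{N^\cU}(\bar a)$ is consistent, hence realized); by Lemma~\ref{L.QFtype} the map $a_j\mapsto a'_j$ is an isometric embedding of $\langle\bar a\rangle$ into $N^\cU$, which by \eqref{2.QEgeneral} (with $F=\langle\bar a\rangle\subseteq G=\langle\bar a,c\rangle$) extends to an isometric embedding $\Psi$ of $\langle\bar a,c\rangle$; then $(\bar a,c)$ and $(\bar a',\Psi(c))$ have the same quantifier-free type, so $\psi^{N^\cU}(\bar a')\le\varphi^{N^\cU}(\bar a',\Psi(c))=\varphi^{N^\cU}(\bar a,c)=\psi^{N^\cU}(\bar a)$, and symmetry yields equality. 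Finally $\eqref{3.QEgeneral}\Rightarrow\eqref{1.QEgeneral}$ is an induction on formulas: the formulas in $\bar x$ lying in the $\|\cdot\|_N$-closure of $\frFxQF$ contain the atomic ones, are closed under the continuous connectives (by uniform continuity on the relevant compact boxes), and are closed under $\inf_y$ and $\sup_y$ by \eqref{3.QEgeneral} together with the fact that $\inf_y$ and $\sup_y$ are $1$-Lipschitz for the relevant seminorms; hence they exhaust $\frFx$. The main obstacle is precisely the functional-analytic step inside $\eqref{2.QEgeneral}\Rightarrow\eqref{3.QEgeneral}$ identifying membership of $\psi$ in the $\|\cdot\|_N$-closure of $\frFxQF$ with being constant on quantifier-free type fibres; the rest is soft bookkeeping with types and saturation.
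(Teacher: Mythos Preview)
Your proof is correct and runs the same cycle as the paper's. The one substantive difference is in $\eqref{2.QEgeneral}\Rightarrow\eqref{3.QEgeneral}$: you argue directly, invoking Stone--Weierstrass on the compact space of quantifier-free $n$-types to show that $\inf_y\varphi$ lies in the $\|\cdot\|_N$-closure of $\frFxQF$ iff it is constant on quantifier-free type fibres, and then use \eqref{2.QEgeneral} to verify constancy; the paper argues by contraposition, assuming $\inf_y\varphi$ is not approximable and building (then realizing by saturation) a $2n$-type asserting that $\bar x(0)$ and $\bar x(1)$ have the same quantifier-free type while their $\inf_y\varphi$-values differ by at least $\varepsilon$. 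These are two sides of the same coin---the paper's one-line claim that this type is ``consistent by our assumptions'' is exactly the contrapositive of the Stone--Weierstrass step you spell out---so your version is arguably more transparent at the point where the paper is terse. You also treat \eqref{2+.QEgeneral} and \eqref{2-.QEgeneral} explicitly, whereas the paper dispatches \eqref{2-.QEgeneral} with a reference to saturation and leaves \eqref{2+.QEgeneral} implicit.
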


\begin{proof} The equivalence of \eqref{2.QEgeneral} and \eqref{2-.QEgeneral} is a well-known consequence of saturation of ultraproducts (see \cite[\S 16]{Fa:STCstar}), and \eqref{2-.QEgeneral} will not be used in this paper (it is included only for completeness).

	For simplicity and without loss of generality we assume that the language~$\calL$ is single-sorted. 
	Assume \eqref{1.QE} and fix finitely generated $\calL$-sub\-struc\-tu\-res $F\leq G\leq N^\cU$ and an isometric embedding $\Phi\colon F\to N^\cU$. By Lemma~\ref{L.QFtype}, $\bar a$ and $\Phi(\bar a)$ have the same quantifier-free type (over the empty set).  It suffices to prove that $\Phi$ extends to an isometric embedding of $G$ in case when~$G$ is generated by $b$ and $F$ for a single element~$b$.

Let $r_\varphi=\varphi(\bar a,b)$ for every $\varphi(\bar x,y)\in \frFxyQF$. 
	Consider the following quantifier-free type over $\Phi(\bar a)$,\footnote{Since the formulas are quantifier-free, we do not need to specify the algebra in which they are being evaluated.} 
	\begin{align*}
		\bt(y)&=\{\varphi(\Phi(\bar a),y)=r_\varphi\mid \varphi(\bar x,y)\in \frFxyQF) \}. 
	\end{align*}
This type is uncountable, but separable in $\|\cdot\|_N$ since the space $\frFx$ is separable in $\|\cdot\|_N$. 

	In order to prove that $\bt(y)$ is satisfiable, fix a finite set of conditions in $\bt(x)$, say $\varphi_j(\bar a,y)=r_j$ for $j<m$, $m\geq 1$. Consider the formula 
	\[
	\psi(\bar x)=\inf_y \max_{j<m} |\varphi_j(\bar x,y)-r_j|. 
	\] 
Then $\psi^{N^\cU}(\bar a)=0$, as witnessed by $b$. Since the theory of $N$ admits elimination of quantifiers, there are quantifier-free formulas $\psi_k(\bar x)$, for $k\geq 1$,  such that $\|\psi(\bar x)-\psi_k(\bar x)\|_N<1/k$ for all $n$. 
Therefore, since $\Phi$ is an isometry between the structures generated by $\bar a$ and $\Phi(\bar a)$, we have  
\[
\psi^{N^\cU}(\Phi(\bar a))=\lim_n \psi_n(\Phi(\bar a))=\lim_n \psi_n(\bar a)=\psi^{N^\cU}(\bar a)=0. 
\]	
Thus $\bt(y)$ is consistent, and by countable saturation (\cite[\S 16]{Fa:STCstar})some $c\in N^\cU$ realizes it. Thus $\bar a,b$ and $\Phi(\bar a),c$ have the same quantifier-free type. By mapping $b$ to $c$ and Lemma~\ref{L.QFtype},  one finds an isometric extension of $\Phi$ to $G$ as required. 

To prove that \eqref{2.QEgeneral} implies \eqref{3.QEgeneral}, assume that \eqref{3.QEgeneral} fails. Fix a quantifier-free formula $\varphi(\bar x,y)$ in $n+1$ variables and $\varepsilon>0$ such that every $\psi\in \frFxQF$  satisfies 
\[
\|\inf_y \varphi(\bar x,y)-\psi(\bar x)\|_N\geq \varepsilon. 
\]
Let $\bt(\bar x(0), \bar x(1))$ be the type in $2n$ variables with the following conditions. 
\begin{align*}
	\psi(\bar x(0))-\psi(\bar x(1))&=0, \text{ for all $\psi\in \frFxQF$, and }\\
	\min(\varepsilon,\inf_y\varphi(\bar x(1),y)-\inf_y\varphi(\bar x(0),y))&\geq \varepsilon.
\end{align*}
	This type is consistent by our assumptions, and by countable saturation it is realized in $N^\cU$ by some $\bar a(0),\bar a(1)$. With $F$ denoting the $\calL$-structure generated by $\bar a(0)$, we have an isometric embedding $\Phi$ of $F$ into $N^\cU$ that sends $\bar a(0)$ to $\bar a(1)$. 
	
	By using countable saturation again, we can find $b\in N^\cU$ such that $\inf_y \varphi(\bar a(1),y)-\varphi(\bar a(0),b)\geq \varepsilon$. Let $G$ be the $\calL$-structure generated by $\bar a(0)$ and $b$. Then $\Phi$ cannot be extended to an isometric embedding of~$G$ into $N^\cU$, and  \eqref{2.QEgeneral} fails. 
	
	Since \eqref{3.QEgeneral} implies that for every quantifier-free formula $\varphi(\bar x,y)$, the formula $\sup_y \varphi(\bar x,y)$ is a uniform $\|\cdot\|_N$-limit of quantifier-free formulas (by replacing~$\varphi$ with $-\varphi$), the proof that \eqref{3.QEgeneral} implies \eqref{1.QEgeneral} follows by induction on complexity of formulas (see \cite{BYBHU}). 
\end{proof}

\section{Concluding remarks}\label{S.Concluding}

The theory of $L^\infty[0,1]$ (with respect to the Lebesgue trace) admits elimination of quantifiers (\cite[Theorem~2.13]{jekel2022free}, see also \cite[Fact 2.10] {benyaacov2013theories} and \cite[Example 4.3]{benyaacov2010continuous}).  
By \cite[Lemma~2.17]{jekel2022free}, every matrix algebra $M_n(\bbC)$ admits elimination of quantifiers. However, if $m\neq n$ then the algebra $M_m(\bbC)\oplus M_n(\bbC)$, with respect to the tracial state $\tau=\frac 12(\tr_m+\tr_n)$ (where $\tr_k$ is the normalized trace on $M_k(\bbC)$), does not admit elimination of quantifiers. This is because the units of the two summands have the same quantifier-free type (the quantifier-free type of a projection is determined by its trace by Lemma~\ref{L.QFtype}), but if $m>n$ then there is no isometric embedding from $M_m(\bbC)$ into $M_m(\bbC)\oplus M_n(\bbC)$ that sends $1_m$ to $1_n$. On the other hand, we have the following. 

\begin{proposition}\label{P.easy} Suppose that $M_m(\bbC)\oplus M_n(\bbC)$ is equipped with a tracial state $\tau$ which has the property that two projections $p$ and $q$ in $M_m(\bbN)\oplus M_n(\bbC)$ are Murray--von Neumann equivalent if and only if $\tau(p)=\tau(q)$.

Then the theory of this algebra admits elimination of quantifiers. 
\end{proposition}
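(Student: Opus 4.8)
First I would reduce, via Proposition~\ref{P.QE}, to a purely finite-dimensional statement. Write $A:=M_m(\bbC)\oplus M_n(\bbC)$ and $\tau=\lambda\tr_m\oplus(1-\lambda)\tr_n$ with $0\le\lambda\le1$. The hypothesis on $\tau$ forces $0<\lambda<1$: if, say, $\lambda=0$, then the unit of the first summand and the zero projection both have trace $0$ but are not Murray--von Neumann equivalent. Hence $\tau$ is faithful, so $\|\cdot\|_2$ is a norm on the finite-dimensional space $A$ and is therefore equivalent to the operator norm; consequently the map $A^\cU\to A$ sending the class of a bounded sequence $(a_k)_k$ to $\lim_{k\to\cU}a_k$ is a trace-preserving $*$-isomorphism, so $A^\cU\cong A$. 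Since every von Neumann subalgebra of the finite-dimensional algebra $A$ is finitely generated, clause~\eqref{2.QE} of Proposition~\ref{P.QE} becomes the assertion that \emph{for all von Neumann subalgebras $F\subseteq G\subseteq A$ and every trace-preserving unital embedding $\Phi\colon F\to A$ there is a trace-preserving unital embedding $\Psi\colon G\to A$ extending $\Phi$}. Verifying this for all such $F,G,\Phi$ yields quantifier elimination by the equivalence of \eqref{1.QE} and \eqref{2.QE} in Proposition~\ref{P.QE}.

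Next, I would establish the crux: every trace-preserving unital embedding $\Phi\colon F\to A$ is implemented by a unitary of $A$, i.e.\ $\Phi(x)=uxu^*$ for all $x\in F$ and some unitary $u\in A$. Granting this, $\Psi:=\Ad(u)\rs G$ is a unital embedding that preserves $\tau$ (because $\tau$ is a trace) and extends $\Phi$, finishing the proof. To prove the claim, write $F=\bigoplus_{l=1}^{L}F_l$ with $F_l\cong M_{d_l}(\bbC)$, fix a minimal projection $e_l\in F_l$ for each $l$, and view both the inclusion $F\hookrightarrow A$ and $\Phi$ as unital $*$-homomorphisms into $M_m(\bbC)\oplus M_n(\bbC)$. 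Each splits into a pair of unital $*$-homomorphisms, into $M_m(\bbC)$ and into $M_n(\bbC)$, and such maps are classified, up to conjugation by a unitary of $M_m(\bbC)$ resp.\ $M_n(\bbC)$, by their multiplicity vectors. Let $(a_l)_l,(b_l)_l$ be the multiplicity vectors of the inclusion in the two summands and $(a_l')_l,(b_l')_l$ those of $\Phi$. Then $\tau(e_l)=\lambda a_l/m+(1-\lambda)b_l/n$ and $\tau(\Phi(e_l))=\lambda a_l'/m+(1-\lambda)b_l'/n$, while the image of $e_l$ in $A$ and $\Phi(e_l)$ are projections whose two components have ranks $(a_l,b_l)$ and $(a_l',b_l')$ respectively. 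Since $\Phi$ is trace-preserving these two projections have the same trace, so by the hypothesis on $\tau$ they are Murray--von Neumann equivalent, forcing $a_l=a_l'$ and $b_l=b_l'$ for every $l$. The inclusion and $\Phi$ thus have the same multiplicity vectors in each summand, so they are conjugated summand by summand by unitaries $u_0\in M_m(\bbC)$ and $u_1\in M_n(\bbC)$, and $u:=u_0\oplus u_1$ implements $\Phi$.

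I do not expect a serious obstacle; the two points that deserve care are the collapse $A^\cU\cong A$ of the tracial ultrapower of a finite-dimensional algebra, and the bookkeeping that turns ``$\Phi$ is trace-preserving'' into ``$\Phi$ and the inclusion have equal multiplicity vectors'', which is precisely where the standing hypothesis on $\tau$ is invoked, and invoked in exactly the stated form. The remaining ingredient --- that finite-dimensional representations of a finite-dimensional \cstar-algebra are determined up to unitary equivalence by their multiplicities --- is classical. One may equivalently present the computation of $\tau(e_l)$ through Lemma~\ref{L.QFtype} (the quantifier-free type of a projection is recorded by its trace), which also makes the contrast with the non-example $\tau=\tfrac12(\tr_m+\tr_n)$, $m\neq n$, transparent: there the units of the two summands both have trace $\tfrac12$ while failing to be Murray--von Neumann equivalent, and the hypothesis on $\tau$ is exactly what excludes such coincidences.
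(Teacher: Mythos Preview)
Your proof is correct and follows essentially the same route as the paper: reduce via Proposition~\ref{P.QE} (using $A^\cU\cong A$) to showing that every trace-preserving embedding $\Phi\colon F\to A$ is inner, and deduce this from the hypothesis on $\tau$ together with the unitary classification of finite-dimensional representations. The only organizational difference is that the paper first applies the hypothesis on $\tau$ to the central projections of $F$ to build a unitary matching them with their $\Phi$-images and then corrects block by block, whereas you apply the hypothesis directly to minimal projections $e_l$ to read off equality of the multiplicity vectors in one step; your presentation is arguably the more transparent of the two.
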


If it is not obvious that tracial states with the property required in Proposition~\ref{P.easy} exist, it may be easier to prove that there are only finitely many tracial states that do not have this property. 
To wit, there are only finitely many Murray-von Neumann equivalence classes of projections in $M_m(\bbC)\oplus M_n(\bbC)$. For any two such distinct classes $[p]$ and $[q]$, there is at most one tracial state $\tau$ such that $\tau(p)=\tau(q)$. (Consider the system of two linear equations in two variables $x$ and~$y$, corresponding to the values of $\tau$ at rank-1 projections of $M_m(\bbC)$ and $M_n(\bbC)$. It has infinitely many solutions if and only if $[p]=[q]$.)

\begin{proof}[Proof of Proposition~\ref{P.easy}]
Suppose that $\tau$ satisfies the assumption,~$F$ is a \cstar-subalgebra of $M_m(\bbC)\oplus M_n(\bbC)$, and $\Phi\colon F\to M_m(\bbC)\oplus M_n(\bbC)$ is a trace-preserving embedding. 
Then $F$ is a direct sum of matrix algebras. Let $p_j$, for $j<k$, be the units of these matrix algebras. Then $\tau(p_j)=\tau(\Phi(p_j))$ for all $j$. By the assumption on $\tau$ there is a partial isometry $v_j$ such that $v_j^* v_j=p_j$ and $v_j v_j^*=\Phi(p_j)$. Therefore $u=\sum_{j<k} v_j$ is a unitary such that $u p_j u^*=\Phi(p_j)$ for $j<k$.  Since every automorphism of a matrix algebra is implemented by a unitary, for every $j<k$ there is $w_j$ such that $w_j^*w_j=w_j w_j^*=p_j$ and for $a\in p_j F$ we have $w_j v_j av_j^* w_j^* =\Phi(a)$. Therefore $\Phi$ coincides with conjugation by the unitary $u'=\sum_{j<k} w_j v_j$. 
 This implies that $\Phi$ automatically extends to an embedding of any $G$ such that $F\subseteq G\subseteq M_m(\bbC)\oplus M_n(\bbC)$ into $M_m(\bbC)\oplus M_n(\bbC)$. 
\end{proof}

The fact that the theory of a fixed structure may or may not admit elimination of quantifiers,  depending on the choice of a language, is hardly surprising. After all, this is exactly what happens with the full matrix algebras, as we pass from the language of tracial von Neumann algebras to the language of \cstar-algebras. 

The idea of the proof of Proposition~\ref{P.easy} should provide the first step towards a confirmation of the following conjecture and a complete answer to \cite[Question~2.18]{jekel2022free}. 

\begin{conjecture}\label{T.2}
	If $T$ is the theory of a tracial von Neumann algebra, then the following are equivalent.  
	\begin{enumerate}
		\item $T$ admits elimination of quantifiers. 
		\item Every model $N$ of $T$ is of type I,  and if $N$ has separable predual then two projections $p$ and $q$ in $N$ are conjugate by a trace preserving automorphism if and only if $\tau(p)=\tau(q)$. 
	\end{enumerate}
\end{conjecture}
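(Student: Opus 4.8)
The plan is to prove the equivalence by establishing the two implications separately, in each case combining the criterion for quantifier elimination (Proposition~\ref{P.QE}, Proposition~\ref{P.QEgeneral}) with the structure theory of finite type~I von Neumann algebras; the proof of Proposition~\ref{P.easy} is the prototype for the harder direction.

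For $(1)\Rightarrow(2)$, recall that a tracial von Neumann algebra is the direct sum of a type~I algebra and a type~II$_1$ algebra, so if some model $M$ of $T$ failed to be type~I it would have a type~II$_1$ direct summand and $T=\Th(M)$ would not admit elimination of quantifiers by Theorem~\ref{T.1}; hence every model of $T$ is type~I. For the statement about projections, let $N\models T$ have separable predual and let $p,q\in N$ with $\tau(p)=\tau(q)$. Since the quantifier-free type of a single projection is determined by its trace (Lemma~\ref{L.QFtype}) and elimination of quantifiers makes every formula a $\|\cdot\|_N$-limit of quantifier-free formulas, $p$ and $q$ have the same \emph{full} type in $N$. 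It then remains to prove the operator-algebraic statement that in a separable type~I tracial von Neumann algebra $\bigoplus_{n}M_n(\bbC)\bar\otimes Z_n$ (with each $Z_n$ abelian) two projections with the same full type are conjugate by a trace-preserving automorphism: such a projection is classified up to a trace-preserving automorphism by the ``rank distributions'' of its components over the centers $Z_n$ and by the multiplicities of isomorphic summands, and one must check that the full type records exactly this data, e.g.\ via conditions asserting the existence of orthogonal subprojections of prescribed traces inside prescribed corners.

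For $(2)\Rightarrow(1)$, note that $N^\cU\equiv N$, so $N^\cU$ is again a model of $T$ and hence type~I. By Proposition~\ref{P.QE}\eqref{2.QE} it suffices to extend each trace-preserving embedding $\Phi\colon F\to N^\cU$ of a finitely generated \wstar-subalgebra $F$ to any finitely generated $G$ with $F\subseteq G\subseteq N^\cU$, and this one would do by generalizing Proposition~\ref{P.easy}: being a von Neumann subalgebra of a type~I algebra, $F$ is itself type~I and so decomposes into matrix amplifications over its center; the projection hypothesis in~(2) (invoked through separable elementary submodels of $N^\cU$) is used to bring the central supports and traces of these matrix blocks into agreement with those of their $\Phi$-images and then to conjugate the matrix parts by a single unitary exactly as in Proposition~\ref{P.easy}; finally the diffuse abelian directions are handled by the fact that $L^\infty[0,1]$ admits elimination of quantifiers (\cite[Theorem~2.13]{jekel2022free}), applied in a form relative to the center of $M_n(\bbC)\bar\otimes L^\infty(X)$.

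I expect the principal obstacle to be assembling this last step in full generality: $N^\cU$ is non-separable, its centers $Z_n$ need not be atomic, and the local matrix dimension need not be bounded, so the real content is a clean \emph{relative} (over-the-center) quantifier-elimination theorem for type~I tracial von Neumann algebras, dovetailed with the purely matricial argument. A secondary difficulty is the last operator-algebraic step of $(1)\Rightarrow(2)$ — that the full type of a projection in a separable type~I algebra is a complete invariant for trace-preserving conjugacy — which, although plausible, requires identifying the correct families of first-order conditions. I would attack the relative type~I statement first, since it is the engine needed for both directions.
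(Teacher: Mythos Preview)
This statement is recorded in the paper as a \emph{conjecture}, not a theorem: the paper does not prove it and says only that ``the idea of the proof of Proposition~\ref{P.easy} should provide the first step towards a confirmation''. There is therefore no proof in the paper to compare your attempt against.

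Your proposal is, as you yourself make explicit in the final paragraph, an outline of a strategy rather than a proof. The architecture you suggest --- Theorem~\ref{T.1} for the type~I reduction, Lemma~\ref{L.QFtype} plus elimination of quantifiers for the projection clause in $(1)\Rightarrow(2)$, and a generalization of Proposition~\ref{P.easy} for $(2)\Rightarrow(1)$ --- is the natural one and is exactly what the paper hints at.

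Two places where the outline is genuinely incomplete deserve emphasis. In $(1)\Rightarrow(2)$ you reduce to the assertion that in a separable type~I tracial von Neumann algebra two projections with the same full type are conjugate by a trace-preserving automorphism; this is not a formal consequence of anything in the paper and is a nontrivial structural fact (equality of full types does not in general imply automorphism-conjugacy, even in separable models). In $(2)\Rightarrow(1)$ the hypothesis is stated only for models with separable predual, while the criterion of Proposition~\ref{P.QE} lives in the non-separable $N^\cU$; passing to a separable elementary submodel is the right manoeuvre, but one then needs an extension lemma for embeddings of \emph{arbitrary} finitely generated type~I subalgebras (with diffuse center, unbounded matrix size, etc.), not merely for finite direct sums of full matrix algebras as in Proposition~\ref{P.easy}. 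You flag both issues yourself, so your self-assessment is accurate: what you have is a plausible program, not a proof, and the paper claims no more than that.
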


Model completeness is a useful weakening of quantifier elimination. 
A theory is \emph{model complete} if every embedding between its models is elementary. As the referee pointed out, the following can be extracted from a well-known semantic characterization of elementary embeddings (see the paragraph between Fact 2.1.2 and Fact 2.1.3 in \cite{AGKE-GeneralizedJung}), but there is some merit in stating it explicitly.

\begin{proposition} \label{P.MC} Assume the Continuum Hypothesis. For a continuous theory $T$ in a complete language the following are equivalent. 
	\begin{enumerate}
		\item \label{1.MC} The theory $T$ is model-complete. 
		\item \label{2.MC} If $M$ and $N$ are separable models of $T$ and $\Phi\colon M\to N$ is an isometric embedding, then there is an isomorphism $\Psi\colon M^\cU\to N^\cU$ such that the following diagram commutes (the horizontal arrows are diagonal embeddings)
		
		\begin{tikzpicture}
			\matrix[row sep=1cm,column sep=1cm] {
				\node  (M) {$M$}; &
				\node (MU)  {$M^\cU$};\\ 
				\node (N) {$N$};&
				\node (NU)  {$N^\cU$};\\
			};
			\draw (M) edge [->] node [left] {$\Phi$} (N);
			\draw  (M) edge[->] (MU); 
			\draw  (N) edge[->] (NU); 
			\draw  (MU) edge[->] node [right] {$\Psi$} (NU); 
		\end{tikzpicture} 
	\end{enumerate} 
\end{proposition}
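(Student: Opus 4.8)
The proposition is a packaging of two completely standard facts about ultrapowers in continuous logic: the diagonal embedding $\iota_M\colon M\to M^{\cU}$ is elementary (Łoś's theorem), and $M^{\cU}$ is countably saturated; the Continuum Hypothesis enters only through the cardinality bound $|M^{\cU}|\le 2^{\aleph_0}=\aleph_1$ valid when $M$ is separable. The plan is: for \eqref{1.MC}$\Rightarrow$\eqref{2.MC}, use model-completeness to turn $\Phi$ into an elementary map and then run a back-and-forth of length $\omega_1$ between $M^{\cU}$ and $N^{\cU}$, started from $\Phi$ transported to the diagonal copies, to produce the isomorphism $\Psi$; for \eqref{2.MC}$\Rightarrow$\eqref{1.MC}, given an arbitrary embedding $\Phi\colon M\to N$ of models of $T$, cut down to separable elementary submodels, apply \eqref{2.MC}, and chase the square.

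\textbf{\eqref{1.MC}$\Rightarrow$\eqref{2.MC}.} Let $M,N$ be separable models of $T$ and $\Phi\colon M\to N$ an isometric embedding; by model-completeness $\Phi$ is elementary, hence $f_0:=\iota_N\circ\Phi\circ\iota_M^{-1}$ is a partial elementary map from $M^{\cU}$ to $N^{\cU}$ with separable domain $\iota_M[M]$ and separable range. Using $|M^{\cU}|\le\aleph_1$ and $|N^{\cU}|\le\aleph_1$ (here CH is used), enumerate $M^{\cU}=\{a_\xi:\xi<\omega_1\}$ and $N^{\cU}=\{b_\xi:\xi<\omega_1\}$, with repetitions if needed, and build an increasing chain $(f_\xi)_{\xi<\omega_1}$ of partial elementary maps with separable domain and range, taking unions at limits, so that $a_\xi\in\dom f_{2\xi+1}$ and $b_\xi\in\ran f_{2\xi+2}$. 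The one nontrivial step is the successor step: to add a point $a\in M^{\cU}$ to the domain of a partial elementary $f$ with separable domain, note that the $f$-image of the type of $a$ over $\dom f$ is a consistent type over $\ran f$ — consistency because $f$ is elementary and any finite subset of this type is realized by $a$ itself on the $M^{\cU}$-side, so the corresponding $\inf_y\max_j|\varphi_j(\cdot,y)-r_j|$ formula evaluates to $0$ on both sides — and it is therefore realized in $N^{\cU}$ by countable saturation, since $\ran f$ is separable; the ``forth'' step is symmetric using countable saturation of $M^{\cU}$. Then $\Psi:=\bigcup_{\xi<\omega_1}f_\xi$ is an elementary bijection $M^{\cU}\to N^{\cU}$, in particular an isometric isomorphism of $\calL$-structures, and $\Psi\circ\iota_M=\iota_N\circ\Phi$ by construction, so the square commutes.

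\textbf{\eqref{2.MC}$\Rightarrow$\eqref{1.MC}.} Let $\Phi\colon M\to N$ be an isometric embedding between arbitrary models of $T$; fix an $\calL$-formula $\varphi(\bar x)$ and a tuple $\bar a$ in $M$. By the downward L\"owenheim--Skolem theorem pick a separable $M_0\preceq M$ containing $\bar a$, then a separable $N_0\preceq N$ with $\Phi[M_0]\subseteq N_0$; so $\Phi\rs M_0\colon M_0\to N_0$ is an isometric embedding of separable models of $T$, and \eqref{2.MC} gives an isomorphism $\Psi\colon M_0^{\cU}\to N_0^{\cU}$ with $\Psi\circ\iota_{M_0}=\iota_{N_0}\circ(\Phi\rs M_0)$. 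Then
\[
\varphi^{M}(\bar a)=\varphi^{M_0}(\bar a)=\varphi^{M_0^{\cU}}(\iota_{M_0}(\bar a))=\varphi^{N_0^{\cU}}(\Psi(\iota_{M_0}(\bar a)))=\varphi^{N_0^{\cU}}(\iota_{N_0}(\Phi(\bar a)))=\varphi^{N_0}(\Phi(\bar a))=\varphi^{N}(\Phi(\bar a)),
\]
using in turn $M_0\preceq M$, elementarity of $\iota_{M_0}$, that isomorphisms preserve the values of all formulas, commutativity of the square, elementarity of $\iota_{N_0}$, and $N_0\preceq N$. As $\varphi,\bar a$ were arbitrary, $\Phi$ is elementary, so $T$ is model-complete. (This direction uses no CH.)

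\textbf{Main obstacle.} There is no deep obstacle; the only real content, and the only use of CH, is that the back-and-forth in the first implication can be completed in $\omega_1$ stages: this works precisely because both ultrapowers are countably saturated of cardinality $\le\aleph_1$, so at every stage $\xi<\omega_1$ the current domain and range are separable and countable saturation applies. Without CH one must replace $N^{\cU}$ by a more saturated elementary extension, i.e., pass to the weaker ``extension'' formulation, exactly as in clause~\eqref{2-.QEgeneral} of Proposition~\ref{P.QEgeneral}. The verification that the transported type stays consistent at each back-and-forth step is the routine but essential calculation; the rest is formal.
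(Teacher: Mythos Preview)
Your proof is correct and follows essentially the same approach as the paper's: for \eqref{1.MC}$\Rightarrow$\eqref{2.MC} you use model-completeness to make $\Phi$ elementary and then run the standard back-and-forth between saturated models of cardinality $\aleph_1$ (which the paper simply cites as \cite[Theorem~16.7.5]{Fa:STCstar}), and for \eqref{2.MC}$\Rightarrow$\eqref{1.MC} both you and the paper reduce to separable models via downward L\"owenheim--Skolem and chase the square. The only difference is level of detail---you spell out the back-and-forth and the chain of equalities explicitly, while the paper argues more tersely.
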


The use of Continuum Hypothesis in \eqref{2.MC} is innocuous, as it has no effect on projective statements. This is a forcing argument well-known to set theorists but poorly documented in the literature; see for example \cite[Lemma~5.20]{eagle2015quantifier}. It is also a red herring, since in its absence \eqref{2.MC} can be replaced with a considerably more complex, but (once mastered) equally useful, assertion about the existence of a $\sigma$-complete back-and-forth system of partial isomorphisms between separable subalgebras of $M^\cU$ and $N^\cU$ (see \cite[\S 8.2, Theorem~16.6.4]{Fa:STCstar})

\begin{proof} Assume \eqref{1.MC}. Then $\Phi[M]$ is a separable elementary submodel of~	$N^\cU$, and the Continuum Hypothesis  implies that $M^\cU\cong N^\cU$, via an isomorphism~$\Psi$ that sends the diagonal copy of $M$ onto $\Phi[M]$. The \cstar-algebra case is proven in  \cite[Theorem~16.7.5]{Fa:STCstar}, and the proof of the general case is virtually identical.

	Assume \eqref{2.MC} and let $\Phi\colon M\to N$ be an embedding between models of~$T$. We need to prove that $\Phi$ is elementary. By replacing $M$ and~$N$ with separable elementary submodels large enough to detect a given failure of elementarity, we may assume they are separable.  With $\Psi$ as guaranteed by \eqref{2.MC}, the embedding of $M$ into $N^\cU$ is, being the composition of an elementary embedding and an isomorphism, elementary. Since the diagonal image of $N$ in $N^\cU$ is elementary, it follows that $\Phi$ is elementary.  
\end{proof}

\begin{conjecture} The theory of a tracial von Neumann algebra $N$ is model complete if and only if $N$ is of type I. 
	\end{conjecture}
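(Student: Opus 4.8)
The plan is to prove the two implications separately. The direction ``type I $\Rightarrow$ model complete'' should be within reach of the methods already in the paper, extending Proposition~\ref{P.easy}; the direction ``not type I $\Rightarrow$ not model complete'' is where the real difficulty lies, and is presumably why the statement is only conjectured.

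\medskip\noindent\emph{Type I $\Rightarrow$ model complete.} For $N$ of type I, I would first observe that the ``profile'' of $N$ — the set $S\subseteq\bbN$ of those $n$ for which the type I$_n$-homogeneous summand is nonzero, the trace of that summand, and the isomorphism class of its abelian centre — is captured by $T=\Th(N)$. This is routine axiomatizability bookkeeping: ``$p$ is an abelian projection'' is $\sup_{x,y}\|[pxp,pyp]\|_2=0$, ``$p\sim q$'' is $\inf_v\max(\|v^*v-p\|_2,\|vv^*-q\|_2)=0$, ``the unit of $zNz$ is a sum of $n$ orthogonal equivalent abelian projections'' is a single condition, and traces of projections are visible to $\tau$. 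Hence every model of $T$ is type I with the same profile, and since a separable type I tracial von Neumann algebra is determined up to isomorphism by its profile, every separable model of $T$ is isomorphic to $N$. By downward L\"owenheim--Skolem it therefore suffices to prove that every unital trace-preserving embedding $\Phi\colon N\to N$ is elementary. Here I would adapt the proof of Proposition~\ref{P.easy}: writing $N=\bigoplus_n M_n\bar\otimes Z_n$, trace preservation and the Murray--von Neumann equivalence constraints control how $\Phi$ distributes the homogeneous type I$_n$-summands (a type I$_n$-homogeneous subalgebra must be carried into $\bigoplus_{n\mid m}M_m\bar\otimes Z_m$); after conjugation by a unitary of $N$ one may assume $\Phi$ is standard on a fixed family of matrix units, and what remains is a family of unital trace-preserving endomorphisms of abelian tracial von Neumann algebras, which are elementary since such theories are model complete (indeed admit quantifier elimination in the atomless case). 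Since conjugation by a unitary is elementary, a routine bootstrapping then shows $\Phi$ itself is elementary.

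\medskip\noindent\emph{Not type I $\Rightarrow$ not model complete.} I would reduce to $N$ of type II$_1$ exactly as in the proof of Theorem~\ref{T.1}, and then feed Lemma~\ref{L.Nate} with $N$ to obtain $M_0\subseteq M_1\subseteq M_1\rtimes_\alpha\bbZ$, an embedding $\Phi\colon M_1\to N^\cU$ as in \eqref{2.Nate}, and an embedding $\Phi_1\colon M_1\rtimes_\alpha\bbZ\to N^\cU$ as in \eqref{3.Nate}. Let $\bar x$ collect the images, under an embedding, of a finite generating set $(g)$ of $M_0$ and of a generator $h$ of the free complement $P$ chosen with $\alpha(h)\neq h$, set $\varepsilon_0:=\|\alpha(h)-h\|_2>0$, and consider the existential condition
\[
\Sigma(\bar x)\ :=\ \inf_y\max\Bigl(\|y^*y-1\|_2,\ \|yy^*-1\|_2,\ \max_g\|[y,x_g]\|_2,\ \bigl|\|[y,x_h]\|_2-\varepsilon_0\bigr|\Bigr).
\]
The implementing unitary $u$ of $\alpha$ commutes with $M_0$ (as $\alpha\rs M_0=\id$) but not with $h$, so $\Phi_1(u)$ witnesses $\Sigma^{N^\cU}\bigl(\Phi_1(\bar g),\Phi_1(h)\bigr)=0$; on the other hand the \emph{quantitative} form of \eqref{2.Nate} extracted from the proof of Lemma~\ref{L.Nate} — the bound $\dist_2\bigl(b,N^\cU\cap\Phi[M_1]'\bigr)\leq \varepsilon^{-1}\max_g\|[b,\Phi(g)]\|_2$ coming from the Kazhdan pair $F,\varepsilon$, valid for $b$ in the unit ball — forces $\Sigma^{N^\cU}\bigl(\Phi(\bar g),\Phi(h)\bigr)\geq\delta_1$ for some $\delta_1>0$. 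Thus $N^\cU$ contains two copies of $M_1$, with identical quantifier-free type, on which the existential formula $\Sigma$ takes the values $0$ and $\delta_1$. To turn this into a failure of model completeness via Robinson's test, I would build an inclusion $M\subseteq N'$ of models of $T$, with $M$ grown around a copy of $M_1$ satisfying the ``clause \eqref{2.Nate}'' property and $N'$ grown around a copy satisfying the ``clause \eqref{3.Nate}'' property, arranged so that one fixed finitely generated copy of $M_1$ has $\Sigma$-value $\delta_1$ inside $M$ and $\Sigma$-value $0$ inside $N'$; this exhibits $M$ as not existentially closed in $N'$.

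\medskip\noindent\emph{The main obstacle.} The hard part is exactly this last construction. Possessing two copies of $M_1$ in a single model of $T$ that agree quantifier-freely but differ on $\Sigma$ does not by itself contradict model completeness: one must realise these two copies inside an honest inclusion of models of $T$. The obvious devices — an automorphism of $N^\cU$ interchanging the copies, an embedding $N^\cU\to(N^\cU)^\cU$ taking one copy to the other, or, under the Continuum Hypothesis and via Proposition~\ref{P.MC}, an isomorphism of the associated ultrapowers — all amount to extending a map over a nonseparable base and are defeated by a direct consistency computation. What seems to be needed instead is a ``model-theoretic forcing'' / omitting-types construction: building $M$ as the union of a $\sigma$-increasing chain of separable subalgebras of a fixed clause-\eqref{3.Nate} model $N'$, realising at each stage enough existential requirements of $T$ while \emph{omitting} every potential $\Sigma$-witness over the distinguished copy of $M_1$, so that the union is a model of $T$ properly but non-elementarily contained in $N'$. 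Verifying that the $\Sigma$-witness type is omittable against $T$ — it is consistent with $T$ by clause \eqref{2.Nate} — and that such a construction can be carried out metrically, is the crux, and is what remains open.
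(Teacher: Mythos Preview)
This statement is a conjecture in the paper, not a theorem; only the forward implication (type I $\Rightarrow$ model complete) is proved, in the Proposition immediately following the conjecture. The reverse implication is left open, so there is no paper proof to compare your second half against.

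For the forward direction your sketch is essentially the paper's argument, with one step left vague. The paper also writes a type I model as $\bigoplus_{n\in X} A_n$ with $A_n\cong M_n(B_n)$ for abelian $B_n$, shows that any trace-preserving embedding $\Phi$ between models respects this decomposition, reduces to the abelian case, and reassembles using the fact that a direct sum of elementary maps is elementary (\cite[Corollary~2.6]{farah2023preservation}). The point you gloss over --- why $\Phi$ must send the I$_n$ summand \emph{exactly} onto the I$_n$ summand, rather than merely into $\bigoplus_{n\mid m} M_m\bar\otimes Z_m$ as you write --- the paper handles cleanly: for $n'<n$ one has $\Phi(1_{A^M_n})\perp 1_{A^N_{n'}}$ (since $M_n$ cannot sit unitally under a type I$_{n'}$ projection), and then induction on $n\in X$ together with the trace equality $\tau(1_{A^M_n})=\tau(1_{A^N_n})$ coming from \cite[Lemma~3.2]{farah2023preservation} forces $\Phi(1_{A^M_n})=1_{A^N_n}$. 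The abelian case is done first, via the atomic/diffuse split: atoms must go to atoms of the same measure (hence an isomorphism on the atomic part), and the diffuse part is elementary by quantifier elimination for atomless probability algebras. Your variant via ``conjugate to standard matrix units, then an abelian residue'' would also go through once the summand-preservation is established.

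Your diagnosis of the reverse direction is accurate and is exactly why the paper states this as a conjecture. Lemma~\ref{L.Nate} gives two tuples in $N^\cU$ with the same quantifier-free type but different existential type; this refutes quantifier elimination (Theorem~\ref{T.1}) but is silent on model completeness, since for the latter one needs a non-elementary inclusion $M\subseteq N'$ of actual models of $\Th(N)$. Your discussion of why the obvious devices (automorphisms of $N^\cU$, iterated ultrapowers, Proposition~\ref{P.MC}) do not manufacture such an inclusion, and of what an omitting-types construction would have to achieve, correctly identifies the obstacle the paper leaves unresolved.
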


A fact relevant to both conjectures is that by \cite[Proposition~3.1]{farah2023preservation}, elementarily equivalent type I tracial von Neumann algebras with separable preduals are isometrically isomorphic. 	
Following a referee's advice, we state the following one-time side remark in the theorem environment, with a proof.

\begin{proposition}
	The theory of every type I tracial von Neumann algebra  is model complete. 
\end{proposition}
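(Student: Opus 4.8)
\begin{proofsketch}
The strategy is to reduce, in three stages, to the two cases already understood: matrix algebras (which admit quantifier elimination, hence are model complete) and abelian algebras. Fix a type I tracial von Neumann algebra and let $T$ be its theory; since $T$ is complete, any two of its models are elementarily equivalent, and — as the entire Murray--von Neumann type decomposition is axiomatizable among tracial von Neumann algebras — every model of $T$ is of type I. I will show every embedding between models of $T$ is elementary.

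\emph{Reduction to separable self-embeddings.} Suppose $\Phi\colon M\to N$ is an embedding between models of $T$ with $\varphi^M(\bar a)\ne\varphi^N(\Phi(\bar a))$ for some $\varphi$ and some tuple $\bar a$ in $M$. By downward L\"owenheim--Skolem, choose a separable elementary submodel $M_0\preceq M$ with $\bar a\in M_0$, and then a separable elementary submodel $N_0\preceq N$ with $\Phi[M_0]\subseteq N_0$; then $\Phi\restriction M_0\colon M_0\to N_0$ is a non-elementary embedding between separable models of $T$. Since $M_0\equiv N_0$ are separable type I tracial von Neumann algebras, \cite[Proposition~3.1]{farah2023preservation} (quoted above) gives an isomorphism $N_0\cong M_0$; composing with it produces a non-elementary trace-preserving unital self-embedding of a separable type I tracial von Neumann algebra. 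So it suffices to prove that every trace-preserving unital self-embedding $\Phi$ of a separable type I tracial von Neumann algebra $M$ is elementary.

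\emph{Reduction to the homogeneous case, and then to the centre.} Write $M=\bigoplus_{n\ge 1}Mz_n$, where $z_n$ is the central projection carrying the type~I$_n$ summand, so $Mz_n\cong M_n(\bbC)\bar\otimes A_n$ for an abelian tracial von Neumann algebra $A_n$. A unital $^*$-homomorphism $M_n(\bbC)\to M_k(\bbC)$ exists only if $n\mid k$; reading this through the fibrewise description of $Mz_k$ yields $\Phi(z_n)z_k=0$ whenever $k<n$, hence $\Phi(z_n)\le\sum_{k\ge n}z_k$. Together with $\sum_n\Phi(z_n)=1=\sum_n z_n$ and $\tau(\Phi(z_n))=\tau(z_n)$, starting from the $z_1$-component and inducting on $n$ forces $\Phi(z_n)=z_n$, so $\Phi$ restricts to a trace-preserving unital self-embedding $\Phi_n$ of $M_n(\bbC)\bar\otimes A_n$; it is enough that each $\Phi_n$ be elementary. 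Now $\Phi_n(M_n(\bbC)\otimes 1)$ is a unital copy of $M_n(\bbC)$ in $M_n(\bbC)\bar\otimes A_n$, and any two such copies in a homogeneous type~I$_n$ algebra are unitarily conjugate (the relevant corner projections have equal central trace, hence are Murray--von Neumann equivalent). Composing $\Phi_n$ with a suitable inner automorphism — which, being an automorphism, preserves elementarity — we may assume $\Phi_n$ fixes the matrix units, so that $\Phi_n=\id_{M_n(\bbC)}\otimes\psi_n$ for a trace-preserving unital self-embedding $\psi_n$ of $A_n$. Since $M_n(\bbC)\bar\otimes A_n$ is interpreted in $A_n$ — quantifier-freely, once the matrix units are named as parameters — by the usual $n\times n$-matrix construction, the full type of a tuple in $M_n(\bbC)\bar\otimes A_n$ is determined by the full type in $A_n$ of its $A_n$-valued matrix entries (recovered from the tuple by left/right multiplication by the matrix units); as $\id_{M_n(\bbC)}\otimes\psi_n$ fixes those parameters and applies $\psi_n$ entrywise, it is elementary as soon as $\psi_n$ is.

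\emph{The abelian case.} A trace-preserving unital self-embedding $\psi$ of a separable abelian tracial von Neumann algebra $A=L^\infty(X,\mu)$ is implemented by a measure-preserving Borel map $T\colon X\to X$. Such $T$ maps atoms to atoms, and a measure-preserving surjection of a purely atomic finite measure space is a weight-preserving bijection of its atoms: if $T$ identified two atoms, following the forward orbit of one offending point gives a non-decreasing sequence of atom-weights, which is eventually periodic since only finitely many atoms exceed any positive weight; weight equality then forces every fibre along the cycle to be a singleton, the orbit cannot enter the cycle from outside, and so the fibre over the original point is a singleton — a contradiction. Hence $T$ preserves the splitting $A=A^{\mathrm{at}}\oplus A^{\mathrm{na}}$ into atomic and atomless parts; on $A^{\mathrm{at}}$ it acts as an automorphism (hence elementarily) and on $A^{\mathrm{na}}\cong L^\infty[0,1]$ by a self-embedding that is elementary because the theory of the atomless probability algebra admits quantifier elimination (\cite[Theorem~2.13]{jekel2022free}, quoted above). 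As products of elementary embeddings are elementary, $\psi$ is elementary, which closes the chain of reductions. The two delicate points are the passage $\psi_n\leadsto\id\otimes\psi_n$ — making precise the quantifier-free interpretability of a homogeneous type~I$_n$ algebra over its centre once matrix units are named — and the atomic-part lemma, which is exactly where one uses that only \emph{model} completeness is at stake: quantifier elimination itself already fails for $M_2(\bbC)\oplus M_3(\bbC)$ with $\tau=\tfrac12(\tr_2+\tr_3)$.
\end{proofsketch}
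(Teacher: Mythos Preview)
Your proof is correct and follows essentially the same route as the paper's: decompose into type~I$_n$ summands, show the embedding respects the central projections $z_n$ via the same ``no $M_n$ into type~I$_k$ for $k<n$, then induct using the trace'' argument, reduce each homogeneous piece to its centre, and handle the abelian case by splitting into atomic and atomless parts (the latter using quantifier elimination for $L^\infty[0,1]$). The paper leans more heavily on \cite{farah2023preservation}---its Lemma~3.2 to read off the atom weights and its Corollary~2.6 for direct sums of elementary maps---whereas you spell out the matrix-unit/interpretability step to the centre and give a self-contained orbit argument for the atomic part; your preliminary reduction to separable \emph{self}-embeddings via \cite[Proposition~3.1]{farah2023preservation} is an extra simplification not in the paper, but the underlying strategy is the same.
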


\begin{proof}
	This is an immediate consequence of the results of \cite{farah2023preservation}. 	Fix a type I tracial von Neumann algebra $M$. 
	
	If $M$ is abelian, then $M\cong L^\infty(\mu)$ for a probability measure $\mu$. Then \cite[Lemma~3.2]{farah2023preservation}  implies that theory of $M$ determines the measures of the atoms of $\mu$ (these are the values of $\rho(1,n)$ for $n\geq 1$, using the notation from this lemma and taken in the decreasing order; $\rho(1,0)$ is the measure of the diffuse part). 
	
	Therefore if $N\equiv M$ then $N\cong L^\infty(\nu)$, and the measures of the atoms of $\nu$ (if any) are exactly the same as the measures of the atoms of $\mu$. Every trace-preserving *-homomorphism $\Phi$ from $M$ into $N$ has to send the diffuse part to the diffuse part. Since these parts are of the same measure, it also sends atomic part to the atomic part. Finally, since the atoms in $M$ and $N$ have the same measures (with multiplicities), and since we are dealing with a probability measure, the restriction of $\Phi$ to the atomic part is an isomorphism.  The theory of $L^\infty$ space  of a diffuse measure admits elimination of quantifiers, hence the restriction of $\Phi$ to the diffuse part is elementary. Thus $\Phi$ can be decomposed as a direct sum of two elementary embeddings, and is therefore elementary by the second part of   \cite[Corollary~2.6]{farah2023preservation}. 
	
	This proves Proposition in case when $M$ is abelian. 
	
	In general, when $M$ is not necessarily abelian, it is isomorphic to $\bigoplus_{n\in X} A^M_n$, where $X\subseteq [1,\infty)$, and for every $n\in X$ we have $A^M_n\cong M_n(B^M_n)$, for a commutative von Neumann algebra $B^M_n$. 
	Suppose that $M\equiv N$. Then 
	\cite[Lemma~3.2]{farah2023preservation}  implies that $N\cong \bigoplus_{n\in X} A^N_n$ (with the same set $X$) and that $\tau(1_{A^M_n})=\tau(1_{A^N_n})$ for all $n\in X$. (Using the notation from 	\cite[Lemma~3.2]{farah2023preservation}, these quantities are both equal to $\sum_k \rho_M(n,k)$, for every $n\in X$.)

Now assume that $\Phi\colon M\to N$ is a trace-preserving *-homomorphism. For $n'<n$ in $X$ we have that $\Phi(1_{A^M_n})$  	and $1_{A^N_{n'}}$ are orthogonal. By induction on $n\in X$, this implies that $\Phi(1_{A^M_n})=1_{A^N_{n}}$ for all $n$. 

By the abelian case,  the restriction of $\Phi$ to $A^M_n$ is elementary  for every~$n$. By  the second part of   \cite[Corollary~2.6]{farah2023preservation}, $\Phi$ is elementary. 
\end{proof}

The original version of this paper contained an appendix by Srivatsav Kunnawalkam Elayavalli, removed following a suggestion of the referee. The appendix was concerned with the two notions that seem to be sensitive to the choice of additional axioms of ZFC, and the author's strong opinion on such definitions is expressed in a long footnote on p.~12 of \cite{farah2023quantifierv2}.

\bibliographystyle{plain}
\bibliography{qe}

\end{document}